\newtheorem*{thm*}{Theorem*}
\newtheorem*{prop*}{Proposition}
\newtheorem{thm}{Theorem}
\newtheorem{claim}{Claim}
\newcommand{\thistheoremname}{}
\newtheorem*{genericthm*}{\thistheoremname}
\newenvironment{namedthm*}[1]
{\renewcommand{\thistheoremname}{#1}%
	\begin{genericthm*}}
	{\end{genericthm*}}
\newcommand{\abs}[1]{\left\lvert{#1}\right\rvert}
\title{Triangles in $C_5$-free graphs \\ and Hypergraphs of Girth Six}
\author
{
Beka Ergemlidze
\thanks{ Department of Mathematics, Central European University, Budapest.
		E-mail: \texttt{beka.ergemlidze@gmail.com}} \qquad 
Abhishek Methuku \thanks{\'Ecole polytechnique f\'ed\'erale de Lausanne and Central European University, Budapest. (Corresponding) E-mail: \texttt{abhishekmethuku@gmail.com}}
}
\begin{document}

\maketitle

\begin{abstract}
We introduce a new approach and prove that the maximum number of triangles in a $C_5$-free graph on $n$ vertices is at most $$(1 + o(1)) \frac{1}{3 \sqrt 2} n^{3/2}.$$ 

We also show a connection to $r$-uniform hypergraphs without (Berge) cycles of length less than six, and estimate their maximum possible size.

\end{abstract}

\section{Introduction}
Motivated by a conjecture of Erd\H os  \cite{Erd} on the maximum possible number of pentagons in a triangle-free graph, Bollob\'as and Gy\H ori \cite{BolGy} initiated the study of the natural converse of this problem. Let $ex(n,K_3,C_5)$ denote the maximum possible number of triangles in a graph on $n$ vertices without containing a cycle of length five as a subgraph. Bollob\'as and Gy\H ori \cite{BolGy} showed that 

\begin{equation}
\label{eq:BGY}
(1 + o(1)) \frac{1}{3 \sqrt 3}  n^{3/2} \le ex(n,K_3,C_5) \le  (1 + o(1)) \frac{5}{4} n^{3/2}.
\end{equation}

Their lower bound comes from the following example: Take a $C_4$-free bipartite graph $G_0$ on $n/3 + n/3$ vertices with about $(n/3)^{3/2}$ edges and double each vertex in one of the color classes and add an edge joining the old and the new copy to produce a graph $G$. Then, it is easy to check that $G$ contains no $C_5$ and it has $(n/3)^{3/2}$ triangles.

Recently, F\"uredi and \"Ozkahya \cite{Furedi} gave a simpler proof showing a slighly weaker upper bound of $\sqrt{3}n^{3/2} + O(n)$. Alon and Shikhelman \cite{AlonS} improved these results by showing that
\begin{equation}
\label{eq:AS}
ex(n,K_3,C_5) \le (1 + o(1)) \frac{\sqrt 3}{2}  n^{3/2}.
\end{equation}

Ergemlidze, Gy\H{o}ri, Methuku and Salia \cite{ErgGMS}  recently showed that 
\begin{equation}
\label{eq:EGMS}
ex(n,K_3,C_5) \le (1 + o(1)) \frac{1}{2\sqrt 2}  n^{3/2}.
\end{equation}

 In this paper our aim is to introduce a new approach and use it to improve two old results and prove a new one. Our approach consists of carefully counting paths of length $5$ (or paths of length $3$) by making use of the structure of certain subgraphs. Roughly speaking, we are able to efficiently bound the number of $5$-paths if its middle edge lies in a dense subgraph (for e.g., in a $K_4$). We expect this approach to have further applications.

Our first result improves the previous estimates \eqref{eq:BGY}, \eqref{eq:AS}, \eqref{eq:EGMS}, on the maximum possible number of triangles in a $C_5$-free graph, as follows.

\begin{thm}
	\label{Main_Result}
	We have,
\begin{displaymath}
ex(n,K_3,C_5) < (1 + o(1)) \frac{1}{3 \sqrt 2}  n^{3/2}.
\end{displaymath}
\end{thm}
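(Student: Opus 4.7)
My plan is to follow the authors' framework: use a careful double-counting of paths of length $5$ (and, where needed, paths of length $3$) in the $C_5$-free graph $G$, exploiting structural constraints on vertex neighborhoods.

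\textbf{Setup and neighborhood structure.} Let $G$ be a $C_5$-free graph on $n$ vertices with $t$ triangles and $m$ edges. First, by iteratively removing vertices of degree $o(\sqrt n)$, I can assume that the minimum degree is at least $\delta = \Omega(\sqrt n)$, while losing only $o(n^{3/2})$ triangles. The key structural observation is that, for every vertex $v$ of $G$, the neighborhood graph $H_v := G[N(v)]$ is a vertex-disjoint union of stars and triangles. Indeed, if $xy$ and $zw$ were two vertex-disjoint edges of $H_v$ (so $v, x, y, z, w$ are distinct with $vx, vy, vz, vw \in E$), then any of the potential edges $xz, xw, yz, yw$ would close a $C_5$ through $v$; a short propagation argument shows that no connected component of $H_v$ can contain a matching of size $2$, so each component must be a star or a triangle. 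In particular, $t(v) := |E(H_v)|$ satisfies $t(v) \le d(v) - s_v$, where $s_v$ is the number of star components of $H_v$.

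\textbf{Path counting.} Consider a path of length $5$ in $G$, say $v_0 v_1 v_2 v_3 v_4 v_5$ with middle edge $v_2 v_3$. The $C_5$-free condition forbids several potential chords: $v_0 v_4$ and $v_1 v_5$ are always forbidden, and further chords become forbidden once $v_2 v_3$ lies in a triangle; namely, a common neighbor $a$ of $v_2, v_3$ adds constraints like $v_0 v_3 \notin E$ and $v_1 v_4 \notin E$ by closing 5-cycles through $a$. When $v_2 v_3$ lies in a $K_4$, still more chords are forbidden. This yields an efficient upper bound on the number of 5-paths whose middle edge is in a dense subgraph. For edges $uv$ with $t(uv) \le 1$ (the sparse case) one does not gain from the dense structure, so I would instead count 3-paths through such edges, using $C_4$-exclusion-type arguments standard in the $C_5$-free literature.

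\textbf{Combining and obstacle.} Summing these two bounds over all edges, converting the path counts into a triangle count via Cauchy--Schwarz and convexity, and optimizing over the degree sequence should yield $t \le (1+o(1)) \frac{1}{3\sqrt 2} n^{3/2}$. The main obstacle is extracting the sharp constant $\frac{1}{3\sqrt 2}$: the dense-case 5-path bound must be handled without wasting constant factors, consistent with the extremal Bollob\'as--Gy\H{o}ri doubling construction (in which each triangle is charged to a unique edge of an underlying $C_4$-free bipartite graph), and the dense vs.\ sparse regimes must be balanced carefully. A particularly subtle step is the global accounting of star components across all neighborhoods $H_v$, since the local structural lemma alone only gives $t \le \frac{2m - n}{3}$, and the path-counting refinement is essential to push the constant down to $\frac{1}{3\sqrt 2}$.
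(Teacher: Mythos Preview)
Your outline correctly identifies the two main ingredients --- a lower bound on the number of $5$-paths via Blakley--Roy, and the local structure of neighborhoods --- but the mechanism you propose for the \emph{upper} bound on $5$-paths is not the one that yields the constant $\tfrac{1}{3\sqrt 2}$, and as written there is a genuine gap.

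The ``forbidden chords'' heuristic does not by itself give a usable upper bound on the number of $5$-paths with a prescribed middle edge: knowing that $v_0v_4$, $v_1v_5$, etc.\ are non-edges says nothing about how many choices of $(v_0,v_1,v_4,v_5)$ exist. The paper's mechanism is different. One first deletes every edge lying in no triangle (this loses no triangles), so your dense/sparse split and the separate $3$-path count for ``sparse'' edges are unnecessary and, as stated, not clearly workable. The remaining edges are then partitioned into \emph{blocks}, which are shown to be either crown-blocks or $K_4$-blocks, and this yields an edge-decomposition $\mathcal D$ of $E(G)$ into $2$-paths, triangles, and $K_4$'s. The decisive lemma --- entirely absent from your proposal --- is that for any member $B$ of $\mathcal D$ and any fixed vertex $y\notin V(B)$, there are at most \emph{two} good $2$-paths $pxy$ with $p\in V(B)$ and $x\notin V(B)$. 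Summing over $y$ gives $\sum_{p\in V(B)} n_p \le 2n$, and then AM--GM yields upper bounds of $n^2$, $\tfrac{4}{3}n^2$, $\tfrac{3}{2}n^2$ for the number of good $5$-paths whose middle edge lies in a given $2$-path, triangle, or $K_4$ of $\mathcal D$. It is this $2n$ bound, not chord exclusion, that drives the constant; without it one does not improve on the earlier $\tfrac{1}{2\sqrt 2}$ bound.

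Your ``combining'' paragraph is also too loose. There is no Cauchy--Schwarz step, and the inequality $t(v)\le d(v)-s_v$ plays no role in obtaining the constant. What one actually does is let $\alpha e(G)$ be the number of edges in crown-blocks, bound the total number of good $5$-paths by $\tfrac{1+\alpha}{8}n^3 d$, compare with the lower bound $\tfrac{1}{2}nd^5 - O(n^3)$ to get $d\le (1+o(1))\bigl(\tfrac{1+\alpha}{4}\bigr)^{1/4}\sqrt n$, bound the triangle count by $\tfrac{4-\alpha}{12}nd$, and then optimize over $\alpha\in[0,1]$ (the maximum of $(4-\alpha)(1+\alpha)^{1/4}$ occurs at $\alpha=0$). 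This parametrized optimization is where $\tfrac{1}{3\sqrt 2}$ actually comes from, and it is not visible in your sketch.
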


Given a hypergraph $H$, its \emph{2-shadow} is the graph consisting of the edges $\{ab \mid ab \subset e \in E(H)\}$. Applying our approach to the $2$-shadow of a hypergraph of girth $6$, we prove the following result.
 
\begin{thm}
\label{girth6}
Let $H$ be an $r$-uniform hypergraph of girth $6$. Then $$\abs{E(H)} \le (1+o(1)) \frac{n^{3/2}}{r^{3/2}(r-1)}.$$
\end{thm}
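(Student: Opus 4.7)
The plan is to adapt the paper's approach of bounding length-$5$ paths whose middle edge lies in a dense subgraph to the $2$-shadow $G$ of $H$, where the role of the dense subgraphs will be played by the cliques $K_e\cong K_r$ induced by the hyperedges $e\in E(H)$.

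I would first record two structural observations about $G$. Since $H$ has (Berge) girth $6$, any two distinct hyperedges share at most one vertex, so the cliques $K_e$ are edge-disjoint and $|E(G)|=\binom{r}{2}|E(H)|$. A short case analysis on the hyperedge labels of consecutive edges yields the stronger statement that \emph{every $C_3$, $C_4$ or $C_5$ of $G$ lies inside a single $K_e$}: if such a cycle used edges from two distinct hyperedges, one could pick a hyperedge per edge of the cycle to obtain a Berge cycle of length $\le 5$ in $H$, contradicting the girth hypothesis. This is the structural replacement for the ``$C_5$-free'' assumption of Theorem~\ref{Main_Result}.

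Next I would carry out the counting. Fixing $e\in E(H)$, I would count the length-$5$ paths $v_0\,v_1\,v_2\,v_3\,v_4\,v_5$ in $G$ whose second edge $v_1v_2$ lies in $K_e$ and whose outer edges leave $K_e$ (equivalently: length-$5$ paths in the bipartite incidence graph of $H$ starting at $e$). Iterating the structural lemma---which amounts to the fact that the incidence graph has girth $\ge 12$, so its radius-$5$ balls are trees---forces the endpoints of such paths to be pairwise distinct and to avoid $e$, giving
\[ N_5(e) \;=\; (r-1)\sum_{v_1\in e}\sum_{v_2\in N_G(v_1)\setminus e}(d_H(v_2)-1) \;\le\; n. \]
Summing $N_5(e)\le n$ over $e\in E(H)$ and rewriting the double sum in terms of the edges $\{v_1,v_2\}$ of $G$ (each in a unique $K_{e'}$) produces
\[ (r-1)\sum_{e'\in E(H)}\left[\Big(\sum_{v\in e'}(d_H(v)-1)\Big)^{\!2}-\sum_{v\in e'}(d_H(v)-1)^{2}\right]\;\le\;nm, \]
where $m=|E(H)|$. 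Applying Cauchy--Schwarz to the first sum and Jensen's inequality to $\sum_v d_H(v)(d_H(v)-1)\ge n\bar d(\bar d-1)$ with $\bar d=rm/n$, and discarding the lower-order terms, rearranges to $m\le(1+o(1))\,n^{3/2}/(r^{3/2}(r-1))$.

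The main obstacle will be the irregular case: if the degree sequence $\{d_H(v)\}$ is far from constant, Jensen's inequality can lose the crucial $\sqrt{r}$ factor, and a more delicate argument---for instance a dyadic decomposition of the degree sequence, or a direct power-mean bound on $\sum_{e'}\big(\sum_{v\in e'}(d_H(v)-1)\big)^{\!2}$---will be required to preserve the $(1+o(1))$ leading constant.
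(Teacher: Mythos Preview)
Your structural lemma (every short cycle of the shadow lies in a single clique $K_e$) and your bound $N_5(e)\le n$ are exactly the paper's Claims~\ref{CyclesInShadow} and~\ref{FixingOneHyperedge}: the paper shows that for each hyperedge $h$ there are at most $(r-1)n$ ordered good $3$-paths $v_0v_1v_2v_3$ in $\partial H$ with $v_0,v_1\in h$, and such $3$-paths are in $(r-1)$-to-$1$ correspondence with your incidence-graph $5$-paths from $h$ (drop $v_0$; conversely prepend any $v_0\in h\setminus\{v_1\}$). So the upper-bound half of your argument is correct and coincides with the paper. (A minor point: your verbal description ``$5$-paths in $G$ whose second edge $v_1v_2$ lies in $K_e$'' does not match your displayed formula, which has $v_2\notin e$; the formula and the incidence-graph interpretation are the correct ones.)

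The gap is exactly where you place it, and the paper does \emph{not} resolve it by a dyadic decomposition or a sharper power-mean bound on $\sum_{e'}S(e')^2$. Instead it sidesteps the convexity issue entirely by recognising that $(r-1)\sum_e N_5(e)$ is the number of ordered good $3$-paths in $\partial H$ and lower-bounding that count with Blakley--Roy. Concretely: after deleting vertices of $H$-degree below $d/r$ (which does not decrease the average degree $d$), the girth-$\ge 5$ condition forces $d_{\max}(H)=O_r(\sqrt n)$ and hence $d_{\max}(\partial H)=O_r(\sqrt n)$; Blakley--Roy then gives at least $n((r-1)d)^3$ ordered $3$-walks in $\partial H$, of which only $O_r(n^2)$ fail to be good $3$-paths. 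Comparing with the upper bound $m(r-1)n$ yields $(r-1)^3d^3n\le (1+o(1))(r-1)dn^2/r$, and the theorem follows. The point is that Blakley--Roy already absorbs all irregularity of the degree sequence, so no separate treatment of the irregular case is needed.
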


Let us mention a related result of Lazebnik and Verstra\"ete \cite{Lazeb_Verstraete} which states the following. If $H$ is an $r$-uniform hypergraph of girth $5$, then $$\abs{E(H)} \le (1+o(1)) \frac{n^{3/2}}{r(r-1)}.$$
Note that Theorem \ref{girth6} shows that if a (Berge) cycle of length $5$ is also forbidden, then the above bound can be improved by a factor of $\sqrt{r}$.

In Section \ref{FurtherImprovementsection}, we show a close connection between Theorem \ref{Main_Result} and Theorem \ref{girth6}, and prove that the estimate in Theorem \ref{Main_Result} can be slightly improved using Theorem \ref{girth6}. However, to illustrate the main ideas of the proof of Theorem \ref{Main_Result}, we decided to state Theorem \ref{Main_Result} in a slightly weaker form.
 
Loh, Tait, Timmons and Zhou \cite{Loh_Tait_Timmons} introduced the problem of simultaneously forbidding an induced copy of a graph and a (not necessarily induced) copy of another graph. A graph is called induced-$F$-free if it does not contain an induced copy of $F$.
They asked the following question: What is the largest size of an induced-$C_4$-free and $C_5$-free graph on $n$ vertices? 
They noted that the example showing the lower bound in \eqref{eq:BGY} is in fact induced-$C_4$-free and $C_5$-free, thus it gives a lower bound of $(1+o(1)) \frac{2}{3 \sqrt{3}} n^{3/2}$. (If the ``induced-$C_4$-free" condition is replaced by ``$C_4$-free" condition, then Erd\H{o}s and Simonovits \cite{Erd_Sim} showed that the answer is $(1+o(1))\frac{1}{2 \sqrt{2}} n^{3/2}$.) This question seems to be difficult to answer. In \cite{EGM}, Gy\H{o}ri and the current authors determined (asymptotically) the maximum size of an induced-$K_{s,t}$-free and $C_{2k+1}$-free graph on $n$ vertices in all the cases except in the case when $s=t=2$ and $k=2$ (which is the above question) but in this case an upper bound of only $n^{3/2}/2$ was proven \cite{EGM}. Here we show that using our approach one can slightly improve this upper bound.

\begin{thm}
\label{indC4C5}
If a graph $G$ is $C_5$-free and induced-$C_4$-free, then $$\abs{E(G)} \le (1+o(1)) \frac{n^{3/2}}{2\sqrt[10]{2}}.$$
\end{thm}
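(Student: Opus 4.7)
The plan is to adapt the path-counting plus structural-analysis approach of Theorems~\ref{Main_Result} and~\ref{girth6} to this setting. I would begin by establishing two structural lemmas for a $C_5$-free, induced-$C_4$-free graph $G$: (a) for any non-adjacent pair $u,v$, the common neighbourhood $N(u)\cap N(v)$ is a clique (by induced-$C_4$-freeness) of size at most $2$; this follows because a clique of size $\ge 3$ in $N(u)\cap N(v)$ together with $u$ would form a $K_4$ to which $v$ is external and adjacent to three of its vertices, contradicting (b); and (b) for any $K_4$ subgraph of $G$, every vertex outside is adjacent to at most one of its four vertices, since otherwise an immediate $C_5$ emerges through the external vertex and three $K_4$-vertices. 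Consequently any two $K_4$'s of $G$ share at most one vertex and are in particular edge-disjoint.

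Next I would form the $4$-uniform hypergraph $H_4$ on $V(G)$ whose hyperedges are the vertex sets of the $K_4$'s of $G$, and verify that $H_4$ has girth at least $6$ by ruling out Berge cycles of length $2,3,4,5$: Berge $2$-cycles contradict edge-disjointness of $K_4$'s; Berge $3$- and $4$-cycles yield (using induced-$C_4$-freeness to supply the chord in length $4$) a triangle one of whose vertices is external to a given $K_4$ but adjacent to two of its vertices, contradicting (b); Berge $5$-cycles produce a $C_5$ in $G$ directly. Applying Theorem~\ref{girth6} with $r=4$ gives at most $(1+o(1))\,n^{3/2}/24$ copies of $K_4$ in $G$, and hence at most $(1+o(1))\,n^{3/2}/4$ edges of $G$ lie inside some $K_4$.

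The proof then proceeds by a careful cherry count. By convexity $P_3(G)=\sum_v\binom{d(v)}{2}\ge 2m^2/n-m$, while grouping cherries by endpoints and using (a) yields $P_3(G)\le 3T_3+\binom{n}{2}-m+T_4'$, where $T_3$ is the number of triangles (an $O(n^{3/2})$ quantity by Theorem~\ref{Main_Result}) and $T_4'$ is the number of induced $K_4-e$ subgraphs of $G$. These inequalities in isolation give only the known bound $m\le(1+o(1))n^{3/2}/2$. The improvement comes from two observations: first, the $K_4$'s already account for $12N_4$ triangle-cherries in $P_3(G)$, so when we split $\sum_v\binom{d(v)}{2}=\sum_v\binom{d_1(v)}{2}+\sum_v\binom{d_2(v)}{2}+\sum_v d_1(v)d_2(v)$ according to the decomposition $E(G)=E_{K_4}\sqcup(E(G)\setminus E_{K_4})$, the cherry budget for the non-$K_4$ edges is reduced; second, applying a parallel girth-$6$ argument to the $4$-uniform hypergraph of "good" $K_4-e$'s (those whose central edge lies in exactly two triangles of $G$—an analysis analogous to that of $H_4$ shows this hypergraph is linear and girth-$6$) controls $T_4'$ up to the triangles coming from edges with three or more triangles, which are bounded through Theorem~\ref{Main_Result}. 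Optimising the resulting weighted combination of (i) the $K_4$-edge bound, (ii) the good-$K_4-e$-edge bound, and (iii) the cherry bound on the remaining graph (which, after peeling the first two classes, is essentially a girth-$6$ graph and hence has at most $(1+o(1))n^{3/2}/(2\sqrt{2})$ edges by Theorem~\ref{girth6} with $r=2$) produces the constant $1/(2\sqrt[10]{2})$; the exponent $1/10$ emerges as the extremum of this optimisation.

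The main obstacle I expect is bounding $T_4'$. The hypergraph of all induced $K_4-e$'s of $G$ is in general not linear, because two such $4$-sets can share a "central" edge precisely when that edge lies in $\ge 3$ triangles, so Theorem~\ref{girth6} does not apply directly. Restricting to good $K_4-e$'s circumvents this—here, a case analysis of how two good $K_4-e$'s could share two vertices each time produces either a $C_5$ or a vertex outside a hidden $K_4$ adjacent to two of its vertices, violating (b) or the $C_5$-free condition—but one must then carefully bound the contribution of the non-good $K_4-e$'s via the triangle count from Theorem~\ref{Main_Result} and balance all three pieces to recover exactly the constant $1/(2\sqrt[10]{2})$ rather than some weaker improvement of $1/2$.
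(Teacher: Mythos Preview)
Your proposal diverges substantially from the paper's argument and, as written, has genuine gaps.

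The paper does \emph{not} use cherry (2-path) counting, nor does it invoke Theorem~\ref{girth6} anywhere in the proof of Theorem~\ref{indC4C5}. Instead it counts \emph{good $5$-paths}: the lower bound is $nd^5/2 - Cn^3$, and the upper bound is obtained by showing that the number of good $5$-paths with middle edge in $G_S$, in a $2$-path, in a triangle, or in a $K_4$ of the decomposition $\mathcal D$ is at most $n^2$ times, respectively, $1$, $1/2$, $4/9$, $1/4$ per edge. Summing gives an upper bound of $n^2\bigl((1-\alpha)|E(G)| + \tfrac{\alpha}{2}|E(G)|\bigr)$. The key step is then to exhibit a $\{C_4,C_5\}$-free subgraph $G'\subseteq G$ with $|E(G')|\ge (1-\alpha)|E(G)| + \tfrac{\alpha}{2}|E(G)|$, constructed block by block (all of $G_S$, and a $C_4$-free half of each block of $G_\Delta$; Claim~\ref{C_4_inside_block} ensures no $C_4$ straddles two blocks). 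The Erd\H os--Simonovits bound $|E(G')|\le (1+o(1))\,n^{3/2}/(2\sqrt 2)$ then yields $nd^5/2\le (1+o(1))\,n^{7/2}/(2\sqrt 2)$, and the constant $2^{-1/10}$ drops out upon taking a fifth root.

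Your cherry-count route does not reach this constant, for two reasons.

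\emph{(1) No fifth root.} Even with perfect control of $T_4'$, the inequality $2m^2/n - m \le 3T_3 + \binom{n}{2}-m + T_4'$ gives only $m\le (1+o(1))\,n^{3/2}/2$, as you yourself observe. Your proposed refinement---splitting degrees as $d_1+d_2$ and optimizing over the three edge classes (i)--(iii)---is not carried out, and there is no mechanism in a quadratic (2-path) count that produces a fifth root. In the paper the exponent $1/10=(1/2)\cdot(1/5)$ arises precisely from solving $d^5\le 2^{-1/2}n^{5/2}$; an optimization over additive bounds on edge classes cannot manufacture $2^{1/10}$.

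\emph{(2) The bound on $T_4'$ fails.} Every induced $K_4\!-\!e$ comes from a pair of triangles in a single crown-block; a crown on $k$ triangles contributes $\binom{k}{2}$ of them but only $k$ triangles. Hence ``bounding the non-good $K_4\!-\!e$'s via the triangle count'' yields at best $T_4'\le \tfrac{k_{\max}-1}{2}\,T_3=O(\sqrt n)\cdot O(n^{3/2})=O(n^2)$, which swamps the cherry inequality. Furthermore, after removing $K_4$-edges and good-$K_4\!-\!e$-edges, the residual graph still contains every triangle from crown-blocks with $k=1$ or $k\ge 3$, so it is \emph{not} of girth $6$ and Theorem~\ref{girth6} with $r=2$ does not apply to it.

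The idea you are missing is the $5$-path count combined with the explicit construction of a $\{C_4,C_5\}$-free subgraph $G'$ carrying at least half of $E(G_\Delta)$ and all of $E(G_S)$; that is what converts the Erd\H os--Simonovits constant $1/(2\sqrt 2)$ into $1/(2\sqrt[10]{2})$.
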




\vspace{2mm}
\textbf{Structure of the paper:} In Section \ref{MainproofSection}, we prove Theorem \ref{Main_Result}. In Section \ref{Girth6plusImprovementSection}, we prove Theorem \ref{girth6} and show how it gives a slight improvement in Theorem \ref{Main_Result}. Finally in Section \ref{indC4C5section}, we prove Theorem \ref{indC4C5}.

\vspace{2mm}
\textbf{Notation:}
Given a graph $G$ and a vertex $v$ of $G$, let $N_1(v)$ and $N_2(v)$ denote the first neighborhood and the second neighborhood of $v$ respectively. 

For a vertex $v$ of $G$, let $d(v)$ be the degree of $v$. The average degree of a graph $G$ is denoted by $d(G)$, or simply $d$ if it is clear from the context. The maximum degree of a graph $G$ is denoted by $d_{max}(G)$ or simply $d_{max}$.

A \emph{walk} or \emph{path} usually referes to an unordered one, unless specified otherwise. That is, a walk or path $v_1v_1v_2 \ldots v_k$ is considered equivalent to $v_kv_{k-1}v_2 \ldots v_1$.

\section{Number of triangles in a $C_5$-free graph: Proof of Theorem \ref{Main_Result}}
\label{MainproofSection}

Let $G$ be a $C_5$-free graph with maximum possible number of triangles. We may assume that each edge of $G$ is contained in a triangle, because otherwise, we can delete it without changing the number of triangles. Two triangles $T, T'$ are said to be in the same \emph{block} if they either share an edge or if there is a sequence of triangles $T, T_1, T_2, \ldots, T_s, T'$ where each triangle of this sequence shares an edge with the previous one (except the first one of course). It is easy to see that all the triangles in $G$ are partitioned uniquely into blocks. Notice that any two blocks of $G$ are edge-disjoint.  Below we will characterize the blocks of $G$.

A block of the form $\{abc_1, abc_2, \ldots, abc_k\}$ where $k \ge 1$, is called a \emph{crown-block} (i.e., a collection of triangles containing the same edge) and a block consisting of all triangles contained in the complete graph $K_4$ is called a \emph{$K_4$-block}. See Figure \ref{figure1}.

\begin{figure}[h]
	\begin{center}
		\includegraphics[scale=0.25]{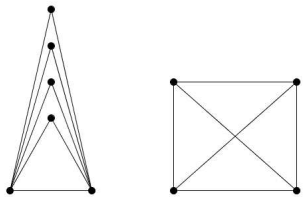}
	\end{center}
	\caption{An example of a crown-block and a $K_4$-block}
	\label{figure1}
\end{figure}

The following claim was proved in \cite{ErgGMS}. We repeat its proof for completeness.

\begin{claim}
	\label{Blocks}
Every block of $G$ is either a crown-block or a $K_4$-block.
\end{claim}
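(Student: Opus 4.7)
Since $B$ is a nontrivial block, I begin by choosing two triangles sharing an edge, say $T_1 = abc$ and $T_2 = abd$ meeting in the edge $ab$, and I split on whether $cd \in E(G)$. The aim is to show that the first alternative forces $B$ to be exactly the $K_4$-block on $\{a,b,c,d\}$, while the second forces every triangle of $B$ to contain $ab$, producing a crown-block.

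In \emph{Case~$1$} (when $cd$ is an edge), the four vertices span a $K_4$, contributing the triangles $abc, abd, acd, bcd$ to $B$. To rule out any further extension, I suppose some $T' \in B$ shares an edge with one of these four; by the edge-transitivity of $K_4$, I may assume $T' = abe$ with $e \notin \{c,d\}$, and then $e\text{-}a\text{-}c\text{-}d\text{-}b\text{-}e$ is a $C_5$ in $G$ (all five vertices are distinct and all five edges are present), contradicting $C_5$-freeness.

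In \emph{Case~$2$} (when $cd$ is not an edge), I claim every triangle of $B$ contains $ab$. If not, block-connectivity furnishes a minimal chain $T_1 = S_0, S_1, \ldots, S_k$ of edge-sharing triangles in which $S_0, \ldots, S_{k-1}$ all contain $ab$ but $S_k$ does not; thus $S_{k-1} = abf$ for some $f$, and $S_k$ shares a non-$ab$ edge with $S_{k-1}$, say $af$, giving $S_k = afg$ with $g \neq b$. The intended contradiction is the cycle $g\text{-}a\text{-}d\text{-}b\text{-}f\text{-}g$, assembled from edges of $T_2$, $S_{k-1}$, and $S_k$.

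The main obstacle is confirming that this cycle has five distinct vertices: $g \neq a, f$ is automatic, $g = b$ would force $S_k = S_{k-1}$, and the remaining possibility $g = d$ would yield a triangle $adf$ and hence a $K_4$ on $\{a,b,d,f\}$. In that subcase I would instead use $T_1 = abc$ to produce the $C_5$ $c\text{-}a\text{-}d\text{-}f\text{-}b\text{-}c$, whose vertex-distinctness relies precisely on the hypothesis $cd \notin E(G)$, with a trivial reduction when $f = c$ (which collapses $S_{k-1}$ to $T_1$ and allows the Case~$2$ argument to be reapplied directly to $T_1, T_2, S_k$). A symmetric argument handles the possibility that $S_k$ shares the edge $bf$ rather than $af$ with $S_{k-1}$. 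In every branch an explicit $5$-cycle appears, completing the proof.
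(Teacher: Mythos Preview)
Your approach is essentially the same as the paper's: start with two edge-sharing triangles $abc,abd$, and show that either $\{a,b,c,d\}$ spans an isolated $K_4$ or every triangle of the block contains $ab$. The paper organizes the second part as a local lemma (any triangle on $ac_i$ or $bc_i$ other than $abc_i$ must be $ac_ic_j$ or $bc_ic_j$) followed by an induction, while you use a minimal chain; these are equivalent.

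There is, however, one case you overlook in Case~2. Your intended cycle $g\text{-}a\text{-}d\text{-}b\text{-}f\text{-}g$ requires the five vertices $g,a,d,b,f$ to be distinct, and you carefully check $g\notin\{a,b,d,f\}$, but you never verify $f\neq d$. Nothing prevents $S_{k-1}=abd=T_2$ (for instance, the minimal chain could simply be $T_1,T_2,S_k$ with $S_k$ sharing the edge $ad$ with $T_2$); in that situation your cycle collapses to $g\text{-}a\text{-}d\text{-}b\text{-}d\text{-}g$. The fix is immediate and uses exactly the ideas you already have: if $f=d$, replace $d$ by $c$ and consider $g\text{-}a\text{-}c\text{-}b\text{-}d\text{-}g$, whose edges come from $S_k$, $T_1$, and $T_2$; the only new distinctness check is $g\neq c$, and $g=c$ would make $S_k=adc$, putting $cd\in E(G)$ contrary to the Case~2 hypothesis. (Equivalently, since $c\neq d$, you may assume $f\neq d$ after possibly swapping the roles of $T_1$ and $T_2$ at the outset.) With this patch your argument is complete.
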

\begin{proof}
If a block contains only one or two triangles, then it is easy to see that it is a crown-block. So we may assume that a block of $G$ contains at least three triangles and let $abc_1, abc_2$ be some two triangles in it. We claim that if $bc_1x$ or $ac_1x$ is a triangle in $G$ which is different from $abc_1$, then $x = c_2$. Indeed, if $x \not = c_2$, then the vertices $a,x,c_1,b,c_2$ contain a $C_5$, a contradiction. Similarly, if $bc_2x$ or $ac_2x$ is a triangle in $G$ which is different from $abc_2$, then $x = c_1$. 

Therefore, if $ac_i$ or $bc_i$ (for $i = 1, 2$) is contained in two triangles, then $abc_1c_2$ forms a $K_4$. However, then there is no triangle in $G$ which shares an edge with this $K_4$ and is not contained in it because if there is such a triangle, then it is easy to find a $C_5$ in $G$, a contradiction. So in this case, the block is a $K_4$-block, and we are done. 

So we can assume that whenever $abc_1, abc_2$ are two triangles then the edges $ac_1, bc_1, ac_2, bc_2$ are each contained in exactly one triangle. Therefore, any other triangle which shares an edge with either $abc_1$ or $abc_2$ must contain $ab$. Let $abc_3$ be such a triangle. Then applying the same argument as before for the triangles $abc_1, abc_3$ one can conclude that the edges $ac_3, bc_3$ are contained in exactly one triangle and so, any other triangle of $G$ which shares an edge with one of the triangles $abc_1, abc_2, abc_3$ must contain $ab$ again. So by induction, it is easy to see that all of the triangles in this block must contain $ab$. Therefore, it is a crown-block, as needed.
\end{proof}

\textbf{Edge Decomposition of $G$:} We define a decomposition $\mathcal D$ of the edges of $G$ into paths of length 2, triangles and $K_4$'s, as follows: Since each edge of $G$ belongs to a triangle, and all the triangles of $G$ are partitioned into blocks, it follows that the edges of $G$ are partitioned into blocks as well. Moreover, by Claim \ref{Blocks}, edges of $G$ can be decomposed into crown-blocks and $K_4$-blocks. We further partition the edges of each crown-block $\{abc_1, abc_2, \ldots, abc_k\}$  (for some $k \ge 1$) into the triangle $abc_1$ and paths $ac_ib$ where $2 \le i \le k$. This gives the desired decomposition $\mathcal D$ of $E(G)$.

\begin{claim}
	\label{atmosttwo2paths}
Let $u, v$ be two non-adjacent vertices of $G$. Then the number of paths of length $2$ between $u$ and $v$ is at most two. Moreover, if $uxv$ and $uyv$ are the paths of length $2$ between $u$ and $v$, then $x$ and $y$ are adjacent.
\end{claim}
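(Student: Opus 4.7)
The plan is to prove both statements simultaneously by contradiction, using two structural facts: $G$ is $C_5$-free, and every edge of $G$ lies in a triangle (this was arranged at the start of the section).

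The key step will be the following observation. Suppose $x$ is any common neighbor of $u$ and $v$. Let $uxw$ be a triangle containing the edge $ux$. Then $w\ne u,x$, and $w\ne v$ since $u,v$ are non-adjacent. I claim that if $y$ is \emph{any other} common neighbor of $u,v$ distinct from $w$, then we obtain a forbidden $C_5$. Indeed, the vertices $w,u,y,v,x$ are then pairwise distinct ($w\ne u,v,x$ by the triangle and the non-adjacency of $u,v$, and $w\ne y$ by assumption), and the edges $wu,\,uy,\,yv,\,vx,\,xw$ all exist (the first and last from the triangle, the middle three from $x,y$ being common neighbors). This gives a $5$-cycle, contradicting that $G$ is $C_5$-free. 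Hence the third vertex $w$ of any triangle through $ux$ must itself be a common neighbor of $u$ and $v$.

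From this observation both conclusions follow quickly. For the ``moreover'' part, if $uxv$ and $uyv$ are the only two paths of length $2$ between $u$ and $v$, then the triangle through $ux$ must have its third vertex in the set of common neighbors other than $x$, which consists of just $\{y\}$; thus $xy$ is an edge. For the main part, suppose for contradiction that $u$ and $v$ have at least three common neighbors $x,y,z$. Applying the observation to the triangle through $ux$, its third vertex must lie in $\{y,z\}$; without loss of generality it is $y$, so $xy$ is an edge. But then $u,x,y,v,z$ are five distinct vertices and $ux,xy,yv,vz,zu$ are all edges, producing a $C_5$ in $G$, contradiction. Therefore the number of common neighbors of $u,v$ is at most two.

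I do not expect a real obstacle here: the argument is a direct exploitation of the ``every edge is in a triangle'' reduction together with the $C_5$-freeness, and the only thing to be careful about is checking that the five vertices exhibited in each putative $5$-cycle are indeed pairwise distinct, which follows in each case from $u\not\sim v$ and the choice of $w$.
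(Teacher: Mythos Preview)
Your proof is correct and follows essentially the same route as the paper: take a triangle through $ux$, observe that its third vertex $w$ must coincide with any other common neighbor of $u,v$ (else the five vertices $u,w,x,v,y$ form a $C_5$), deduce the adjacency $xy$, and in the three-common-neighbor case exhibit the cycle $uxyvz$. The only quibble is the sentence ``Hence the third vertex $w$ of any triangle through $ux$ must itself be a common neighbor of $u$ and $v$'': strictly, what you have shown is that every common neighbor other than $x$ equals $w$, not that $w$ is necessarily a common neighbor; but since in both applications another common neighbor is assumed to exist, this does not affect the argument.
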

\begin{proof}
First let us prove the second part of the claim. Since we assumed every edge is contained in a triangle and $u$ and $v$ are not adjacent, there is a vertex $w\not =v$ such that $uxw$ is a triangle. If $w \not = y$, then $uwxvy$ is a $C_5$, a contradiction. So $w = y$, so $x$ and $y$ are adjacent, as desired.  

Now suppose that there are $3$ distinct vertices $x,y,z$ such that $uxv, uyv, uzv$ are paths of length $2$ between $u$ and $v$. Then $x$ and $y$ are adjacent by the discussion in the previous paragraph. Therefore  $uxyvz$ is a $C_5$ in $G$, a contradiction, proving the claim.
\end{proof}

Let $t(v)$ be the number of triangles containing a vertex $v$ and let $t(G) = t = \sum_{v\in V(G)} \frac{t(v)}{n}$. 
Observe that number of triangles in $G$ is $nt/3$. Our goal is to bound $t$ from above.

First we claim that for any vertex $v$ of $G$, 
\begin{equation}
\label{ConnectionBetweendandt}
t(v) \leq d(v)\leq 2 t(v).
\end{equation}

Indeed, $d(v)\leq 2 t(v)$ simply follows by noting that every edge is in a triangle. Now notice that $t(v)$ is equal to the number of edges contained in the first neighborhood of $v$ (denoted by $N_1(v)$). Moreover, there is no path of length three in the subgraph induced by $N_1(v)$ because otherwise there is a $C_5$ in $G$. So by Erd\H{o}s-Gallai theorem, the number of edges contained in $N_1(v)$ is at most $\frac{3-1}{2}\abs{N_1(v)} = d(v)$. Therefore, $t(v) \leq d(v)$.

Note that by adding up \eqref{ConnectionBetweendandt} for all the vertices $v \in V(G)$ and dividing by $n$, we get 
\begin{equation}
\label{ConnectionBetweendaveandtave}
t \leq d\leq 2 t.
\end{equation}

Suppose there is a vertex $v$ of $G$, such that $t(v) < t/3$. Then we may delete $v$ and all the edges incident to $v$ from $G$ to obtain a graph $G'$ such that $t(G') > 3(nt/3-t/3)/(n-1) = t(G)$. Then it is easy to see that if the theorem holds for $G'$, then it holds for $G$ as well. Repeating this procedure, we may assume that for every vertex $v$ of $G$, $t(v) \ge t/3$. Therefore, by \eqref{ConnectionBetweendandt}, we may assume that the degree of every vertex of $G$ is at least $t/3$.

\begin{claim}
\label{max_degree}
We may assume that $d_{max}(G) \le 6 \sqrt{3} \sqrt{n}$.
\end{claim}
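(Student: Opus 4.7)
\textbf{Proof plan for Claim \ref{max_degree}.} The plan is to show that if $d_{max}(G) > 6\sqrt{3}\sqrt{n}$, then $G$ already contains fewer than $(1+o(1))\frac{n^{3/2}}{3\sqrt{2}}$ triangles, so the conclusion of Theorem \ref{Main_Result} would hold trivially in that regime; consequently we may restrict to the case $d_{max}(G) \le 6\sqrt{3}\sqrt{n}$. The core of the argument is a double count of the edges incident to $N_1(v)$ for a vertex $v$ of maximum degree, where the linear-in-$n$ bound is extracted from Claim \ref{atmosttwo2paths}.

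First I would pick a vertex $v$ with $d(v) = d_{max}$ and estimate $\sum_{x \in N_1(v)} d(x)$ in two ways. By the reduction carried out immediately before the claim, every vertex of $G$ has degree at least $t/3$, giving the lower bound $\sum_{x \in N_1(v)} d(x) \ge d(v) \cdot t/3$. For the upper bound I partition the edges incident to $N_1(v)$ into three groups: the $d(v)$ edges back to $v$; the edges inside $N_1(v)$, contributing $2t(v)$ since $t(v)$ counts the edges of $G[N_1(v)]$; and the edges between $N_1(v)$ and $N_2(v)$. For the last group, every $u \in N_2(v)$ is non-adjacent to $v$, so by Claim \ref{atmosttwo2paths} it has at most two neighbors in $N_1(v)$ (they are the midpoints of the at-most-two $2$-paths from $v$ to $u$), and this group contributes at most $2|N_2(v)| \le 2(n-1-d(v))$. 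Using the bound $t(v) \le d(v)$ from \eqref{ConnectionBetweendandt} then yields $\sum_{x \in N_1(v)} d(x) \le d(v) + 2(n-1)$.

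Combining the two estimates yields $d(v)(t/3 - 1) \le 2(n-1)$, i.e.\ $d(v) \le 2(n-1)/(t/3 - 1)$ (we may certainly assume $t > 3$, since otherwise $G$ has only $O(n)$ triangles). If $d_{max} = d(v) > 6\sqrt{3}\sqrt{n}$, then rearranging gives $t \le (1+o(1))\sqrt{n/3}$, and the total number of triangles is $nt/3 \le (1+o(1))\frac{n^{3/2}}{3\sqrt{3}} < (1+o(1))\frac{n^{3/2}}{3\sqrt{2}}$. Thus Theorem \ref{Main_Result} holds automatically in this case, which justifies assuming $d_{max}(G) \le 6\sqrt{3}\sqrt{n}$ from here on.

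I do not anticipate a real obstacle: the proof rests entirely on the bound from Claim \ref{atmosttwo2paths}, which immediately forces $e(N_1(v), N_2(v)) \le 2n$. The specific constant $6\sqrt{3}$ is tuned precisely so that the resulting Moore-type triangle count $n^{3/2}/(3\sqrt{3})$ in the high-max-degree case is strictly below the target $n^{3/2}/(3\sqrt{2})$ of Theorem \ref{Main_Result}.
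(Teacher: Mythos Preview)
Your proposal is correct and follows essentially the same approach as the paper: both use the minimum-degree bound $d(x)\ge t/3$ to lower bound $\sum_{x\in N_1(v)}d(x)$, bound edges inside $N_1(v)$ via $t(v)\le d(v)$, and invoke Claim~\ref{atmosttwo2paths} to cap each vertex of $N_2(v)$ at two neighbours in $N_1(v)$, concluding that $t<(1+o(1))\sqrt{n/3}$ and hence the triangle count falls below $n^{3/2}/(3\sqrt{3})$. The only cosmetic difference is that the paper phrases the computation as a lower bound on $|N_2(v)|$ (and then uses $|N_1(v)|+|N_2(v)|\le n$), whereas you phrase it as an upper bound on $e(N_1(v),N_2(v))$ via $|N_2(v)|\le n-1-d(v)$; these are the same inequality read in opposite directions.
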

\begin{proof}
Suppose that there is a vertex $v$ such that $d(v) >  6 \sqrt{3} \sqrt{n}$. The sum of degrees of the vertices in $N_1(v)$ is at least $\frac{\abs{N_1(v)}t}{3} = \frac{d(v)t}{3}$ as we assumed that the degree of every vertex is at least $t/3$. The number of edges inside $N_1(v)$ is $t(v)$, which is at most $d(v)$ by \eqref{ConnectionBetweendandt}. Therefore the number of edges between $N_1(v)$ and $N_2(v)$ is at least $\frac{d(v)t}{3} - 2d(v)$. Now notice that any vertex in $N_2(v)$ is incident to at most two of these edges by Claim \ref{atmosttwo2paths}. Therefore, $\abs{N_2(v)} \ge \frac{d(v)t}{6} - d(v)$. 

Thus we have,  $$n \ge \abs{N_1(v)} + \abs{N_2(v)} \geq d(v) + \frac{d(v)t}{6} - d(v) = \frac{d(v)t}{6} > \frac{6 \sqrt{3} \sqrt{n}t}{6}, $$
which implies $t < \sqrt{\frac{n}{3}}$. Therefore, the total number of triangles in $G$ is less than $\frac{n^{3/2}}{3 \sqrt{3}}$, proving Theorem \ref{Main_Result}.
\end{proof}

By the Blakley-Roy inequality, the number of (unordered) walks of length five in $G$ is $nd^5/2$. First let us show that most of these walks are paths. Let $v_0v_1v_2v_3v_4v_5$ be a walk that is not a path. Then $v_i = v_j$ for some $i < j$. Fix some $i < j$. Then there are $n$ choices for $v_0$, and then at most $d_{max}$ choices for every $v_k$ with $k \le j-1$, then since $v_j = v_i$, there is only choice for $v_j$ and again at most $d_{max}$ choices for every $v_k$ with $k \ge j+1$. So in total the number of walks that are not paths is at most $\binom{6}{2} n(d_{max})^4$ as there are $\binom{6}{2} = 15$ choices for $i, j$. Thus the number of (unordered) paths of length five in $G$ is at least $nd^5/2 - 15 n(d_{max})^4$. From now, we refer to a path of length five as a $5$-path. 

We say a $5$-path $v_0v_1v_2v_3v_4v_5$ is \emph{bad} if there exists an $i$ such that $v_iv_{i+1}v_{i+2}$ is a triangle of $G$; otherwise it called \emph{good}. Our aim is to show that the number of bad $5$-paths is very small. Let $v_0v_1v_2v_3v_4v_5$ be a bad $5$-path. Then there is an $i$ so that $v_iv_{i+1}v_{i+2}$ is a triangle. If we fix an $i$, there are at most $2nt$ choices for $v_iv_{i+1}v_{i+2}$ as each of the $nt/3$ triangles can be ordered in $3! = 6$ ways, and there are at most $d_{max}$ choices for every vertex $v_k$ with $k < i$ or $k > i+2$. There are four choices for $i$. Therefore, the total number of $5$-paths that are bad is at most $8nt (d_{max})^3$. This means that the number of good $5$-paths is at least $nd^5/2 - 15 n(d_{max})^4-8nt (d_{max})^3$. By \eqref{eq:BGY}, the number of triangles of $G$ is at most $(1+o(1))\frac{5n^{3/2}}{4}$. Since the number of triangles of $G$ is $nt/3$, we have $t \le \frac{15}{4}(1+o(1))n^{1/2}$. Now using Claim \ref{max_degree}, it follows that the number of good $5$-paths is at least 
\begin{equation}
\label{lower_bound}
\frac{nd^5}{2} - 15 n(6 \sqrt{3} \sqrt{n})^4- 8n\frac{15}{4} n^{1/2} (6 \sqrt{3} \sqrt{n})^3 \ge \frac{nd^5}{2}  - C n^3,
\end{equation}
where $C$ is some positive constant.

Now we seek to bound the number of good $5$-paths from above. Recall that we defined a decomposition $\mathcal D$ of the edges of $G$ into three types of subgraphs: paths of length 2, triangles and $K_4$'s.
We distingush three cases depending on which type of subgraph the middle edge of a good $5$-path belongs to, and bound the number of good $5$-paths in each of those cases separately in the following three claims. 

A path of length two (or a $2$-path) $xyz$ is called \emph{good} if $x$ and $z$ are not adjacent. 

\begin{claim}
\label{Claim2path}
Let $abc$ be a $2$-path of the edge-decomposition $\mathcal D$. Then the number of good $5$-paths in $G$ whose middle edge is either $ab$ or $bc$ is at most $n^2$. 
\end{claim}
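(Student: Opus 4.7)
Consider a good $5$-path $P = v_0 v_1 v_2 v_3 v_4 v_5$ whose middle edge $\{v_2, v_3\}$ equals either $\{a,b\}$ or $\{b,c\}$. By the symmetry of the $2$-path $abc$ under the swap $a \leftrightarrow c$, it suffices to treat the case $\{v_2, v_3\} = \{a, b\}$, and after fixing the orientation $v_2 = a$, $v_3 = b$ (permitted since paths are unordered) the strategy is to use Claim \ref{atmosttwo2paths} repeatedly to pin down both the endpoints $v_0, v_5$ and the internal vertices $v_1, v_4$.

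The first step is to extract the structural information coming from $abc$ being a $2$-path of $\mathcal{D}$. By construction of $\mathcal{D}$, the $2$-path $abc$ arises from a crown-block of $G$ whose spine is $ac$, so in particular $abc$ is a triangle of $G$; moreover, within this crown-block the unique triangle containing the edge $ab$ is $abc$ itself. Since distinct blocks are edge-disjoint by Claim \ref{Blocks}, the edge $ab$ belongs to exactly one triangle of $G$, which gives $N(a) \cap N(b) = \{c\}$ (and, symmetrically, $N(b) \cap N(c) = \{a\}$).

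Goodness of $P$ immediately gives $v_0 \not\sim a$, $v_1 \not\sim b$, $v_4 \not\sim a$, $v_5 \not\sim b$. I would then upgrade this to $v_0 \not\sim b$ as follows: if $v_0 \sim b$, then $v_0 v_1 a$ and $v_0 b a$ are two distinct $2$-paths between the non-adjacent pair $(v_0, a)$, so Claim \ref{atmosttwo2paths} would force the middle vertices $v_1$ and $b$ to be adjacent, contradicting $v_1 \not\sim b$. Symmetrically $v_5 \not\sim a$, so both endpoints lie in $V' := V \setminus (N[a] \cup N[b])$. Next, applying Claim \ref{atmosttwo2paths} to each of the non-adjacent pairs $(v_0, a)$ and $(v_5, b)$, there are at most two candidates for $v_1 \in N(v_0) \cap N(a)$ and at most two for $v_4 \in N(v_5) \cap N(b)$; the further condition $v_1 \ne c$ (automatic from $v_1 \not\sim b$ and $N(a) \cap N(b) = \{c\}$) eliminates one candidate precisely when $v_0 \sim c$, since the adjacency clause of Claim \ref{atmosttwo2paths} then forces $c$ to be one of the two common neighbors of $(v_0, a)$. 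The symmetric saving holds for $v_4$ when $v_5 \sim c$.

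A double count over $(v_0, v_1)$-pairs via Claim \ref{atmosttwo2paths} then shows that the number of ``left halves'' $(v_0, v_1)$ is at most $2|V'| - |V' \cap N(c)|$, and similarly for ``right halves'' $(v_4, v_5)$. Together with the symmetric contribution for middle edge $bc$, this produces the claimed $O(n^2)$ bound. The main obstacle will be extracting the exact constant: the crude product of left and right halves gives roughly $4|V'|^2 \le 4n^2$ per middle edge, so to reach $n^2$ for the combined count of middle edges $ab$ and $bc$ one has to exploit both the $v_0 \sim c$ (and $v_5 \sim c$) reduction carefully and the additional structural consequence that whenever a pair like $(v_0, a)$ has two common neighbors, those two common neighbors are themselves adjacent --- an adjacency that couples the two halves of the path and is the key to squeezing the constant down.
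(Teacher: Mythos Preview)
Your structural analysis is correct: the $2$-path $abc$ of $\mathcal D$ does come from a crown-block with spine $ac$, so $abc$ is a triangle and each of the edges $ab$, $bc$ lies in exactly one triangle of $G$, giving $N(a)\cap N(b)=\{c\}$ and $N(b)\cap N(c)=\{a\}$. The deductions $v_0,v_5\notin N[a]\cup N[b]$ and $v_1\ne c$, $v_4\ne c$ are also fine.

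The genuine gap is the counting. Bounding the left and right halves separately via Claim~\ref{atmosttwo2paths}, you obtain at most $2|V'|-|V'\cap N(c)|\le 2n$ left halves and the same for right halves, hence at most $4n^2$ good $5$-paths per middle edge and $8n^2$ in total. You acknowledge this and hope that the adjacency clause of Claim~\ref{atmosttwo2paths} (the two common neighbours of a non-adjacent pair are themselves adjacent) will ``couple the two halves'' and cut the constant by a factor of $8$. But that adjacency concerns the two candidate values of $v_1$ for a \emph{fixed} $v_0$; it says nothing linking $(v_0,v_1)$ to $(v_4,v_5)$, and I do not see any way to turn it into such a link. The ``$v_0\sim c$'' saving you isolate is likewise only a lower-order refinement (it subtracts $|V'\cap N(c)|\le d_{\max}=O(\sqrt n)$ from a quantity of size $\Theta(n)$) and cannot recover a constant factor.

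What the paper does instead is abandon the per-edge left/right split and count globally over the whole triangle $\{a,b,c\}$. For $p\in\{a,b,c\}$, let $n_p$ be the number of good $2$-paths $pxy$ with $x,y\notin\{a,b,c\}$; then the number of good $5$-paths with middle edge in $\{ab,bc\}$ is at most $n_an_b+n_bn_c=n_b(n_a+n_c)\le\bigl((n_a+n_b+n_c)/2\bigr)^2$. The decisive lemma is that for every fixed far endpoint $y\notin\{a,b,c\}$ there are at most \emph{two} good $2$-paths $pxy$ with $p\in\{a,b,c\}$ and $x\notin\{a,b,c\}$: three such paths $p_ix_iy$ would force the edges $p_ix_i$ to pairwise intersect (otherwise a $C_5$ through $y$ and two $p_i$'s appears, using that $a,b,c$ are pairwise adjacent), hence either $p_1=p_2=p_3$ (ruled out by Claim~\ref{atmosttwo2paths}) or $x_1=x_2=x_3$ (which makes $\{a,b,c,x\}$ a $K_4$, impossible since $abc$ is a $2$-path of $\mathcal D$, not part of a $K_4$-block). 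This yields $n_a+n_b+n_c\le 2n$ and hence the bound $n^2$. The missing idea in your proposal is precisely this: fix the \emph{outer} endpoint $y$ and bound the total contribution summed over the three anchors $a,b,c$ simultaneously, rather than bounding each $n_p$ on its own.
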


\begin{proof}
A good $5$-path $xypqzw$ whose middle edge is $ab$ or $bc$ contains good $2$-paths, $xyp, qzw$ as subpaths (where $pq$ is either $ab$ or $bc$). Moreover, since $xypqzw$ is a good $5$-path and the $2$-path $abc$ is contained in the triangle $abc$ (because of the way we defined the decomposition $\mathcal D$), it follows that $x,y \not \in \{a,b,c\}$ and $z,w \not \in \{a,b,c\}$. 

Let $n_a$ be the number of good $2$-paths in $G$ of the form $axy$ where $x, y \not \in \{a,b,c\}$, and let $n_b$ be the number of good $2$-paths in $G$ of the form $bxy$ where $x, y \not \in \{a,b,c\}$. We define $n_c$ similarly. 
Then the number of good $5$-paths whose middle edge is either $ab$ or $bc$ is at most $$ n_an_b + n_b n_c = n_b (n_a + n_c) \le \left( \frac{n_a + n_b + n_c}{2} \right)^2.$$

We claim that for any fixed vertex $y \not \in \{a,b,c\}$, there are at most two good $2$-paths of the form $pxy$ with $p \in \{a,b,c\}$ and $x \not \in \{a,b,c\}$. If this claim is true, then $n_a + n_b + n_c \le 2n$, so the right-hand-side of the above inequality is at most $n^2$, proving Claim \ref{Claim2path}.

It remains to prove this claim. Suppose for a contradiction that there are three such good $2$-paths, say, $p_1x_1y, p_2x_2y, p_3x_3y$. Notice that if $p_ix_i$ is disjoint from $p_jx_j$ for some $i, j \in \{1,2,3\}$, then $p_ip_jx_jyx_i$ forms a $C_5$ in $G$, a contradiction (note that here we used that $p_i$ and $p_j$ are adjacent even when $\{p_i, p_j\} = \{a,c\}$ because of the way we defined $\mathcal D$). Thus the edges $p_1x_1, p_2x_2, p_3x_3$ pair-wise intersect, which implies that either $p_1 = p_2 = p_3 = p$ or $x_1 = x_2 = x_3 = x$ (since $p_1, p_2, p_3 \in \{a,b,c\}$ and $x_1, x_2, x_3 \not \in \{a,b,c\}$). The former case is impossible by Claim \ref{atmosttwo2paths} and in the latter case, note that $a,b,c,x$ forms a $K_4$, but this contradicts the definition of $\mathcal D$ since $abc$ was assumed to be a $2$-path component of $\mathcal D$ and no $2$-path of $\mathcal D$ comes from a $K_4$-block of $G$.
\end{proof}

\begin{claim}
\label{Triangle}
Let $abc$ be a triangle of the edge-decomposition $\mathcal D$. Then the number of good $5$-paths in $G$ whose middle edge is either $ab, bc, ca$ is at most $\frac{4n^2}{3}$.
\end{claim}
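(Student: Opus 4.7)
The plan is to mirror the structure of the proof of Claim~\ref{Claim2path}. For each $p \in \{a,b,c\}$, let $n_p$ denote the number of good $2$-paths $pyx$ in $G$ with $x,y \notin \{a,b,c\}$. As in the proof of Claim~\ref{Claim2path}, a direct check shows that every good $5$-path $v_0v_1v_2v_3v_4v_5$ whose middle edge is one of $ab, bc, ca$ has $v_0, v_1, v_4, v_5 \notin \{a,b,c\}$; for example, if the middle edge is $ab$ then $v_1 = c$ would make $v_1v_2v_3 = cab$ a triangle and violate goodness, and the analogous observation handles $v_0, v_4, v_5$. Consequently the two $2$-paths on either side of the middle edge are counted by $n_a, n_b$ (respectively $n_b, n_c$ or $n_c, n_a$), and the number of good $5$-paths in question is at most
\[
n_a n_b + n_b n_c + n_c n_a \;\le\; \frac{(n_a+n_b+n_c)^2}{3},
\]
where the second inequality is the standard consequence $(x+y+z)^2 \ge 3(xy+yz+zx)$ of Cauchy--Schwarz.

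It therefore suffices to prove $n_a + n_b + n_c \le 2n$. Following the template of the inner sub-claim in the proof of Claim~\ref{Claim2path}, I would establish the stronger local statement that for every $x \notin \{a,b,c\}$ there are at most two good $2$-paths $pyx$ with $p \in \{a,b,c\}$ and $y \notin \{a,b,c\}$; summing over $x$ then yields $n_a + n_b + n_c \le 2(n-3)$. Suppose for contradiction that three such paths $p_1 y_1 x,\, p_2 y_2 x,\, p_3 y_3 x$ exist. Because $a,b,c$ are pair-wise adjacent, any two of them whose edges $p_i y_i, p_j y_j$ are vertex-disjoint would yield a $5$-cycle $p_i p_j y_j x y_i$ in $G$, contradicting $C_5$-freeness. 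Hence the pairs $\{p_i, y_i\}$ meet pair-wise, and since $p_i \in \{a,b,c\}$ while $y_i \notin \{a,b,c\}$, this forces either $p_1 = p_2 = p_3$ or $y_1 = y_2 = y_3$.

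The main obstacle is ruling out both collision cases. The first, $p_1 = p_2 = p_3 = p$, would give three $2$-paths between the non-adjacent pair $\{p, x\}$, contradicting Claim~\ref{atmosttwo2paths}. The second case, $y_1 = y_2 = y_3 = y$, is the more delicate one: then $y \notin \{a,b,c\}$ is a common neighbour of $a,b,c$, so $\{a,b,c,y\}$ spans a $K_4$ in $G$. The four triangles of this $K_4$ pair-wise share an edge and hence belong to the same block; but this block contains triangles sharing distinct edges (for instance $abc$ and $aby$ share $ab$, while $abc$ and $acy$ share $ac$), so by Claim~\ref{Blocks} it must be a $K_4$-block. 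This contradicts the hypothesis that $abc$ is a \emph{triangle} component of $\mathcal{D}$, since triangle components of $\mathcal{D}$ arise only from crown-blocks. Plugging $n_a + n_b + n_c \le 2n$ into the display above yields the required estimate $\tfrac{4n^2}{3}$.
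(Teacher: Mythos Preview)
Your proof is correct and follows essentially the same route as the paper's: define $n_a,n_b,n_c$, bound the count by $n_an_b+n_bn_c+n_cn_a\le\frac{(n_a+n_b+n_c)^2}{3}$, and reuse the sub-claim from Claim~\ref{Claim2path} to get $n_a+n_b+n_c\le 2n$. The paper simply writes ``by the same argument as in the proof of Claim~\ref{Claim2path}'' for the sub-claim, whereas you spell out explicitly why the $y_1=y_2=y_3$ collision forces the block of $abc$ to be a $K_4$-block rather than a crown-block; this extra detail is sound and is exactly the adaptation needed when $abc$ is a triangle of $\mathcal D$ rather than a $2$-path.
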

\begin{proof}
The proof is very similar to that of the proof of Claim \ref{Claim2path}. A good $5$-path $xypqzw$ whose middle edge is $ab, bc, ca$ contains good $2$-paths, $xyp, qzw$ as subpaths. Moreover, since $xypqzw$ is a good $5$-path, it follows that $x,y \not \in \{a,b,c\}$ and $z,w \not \in \{a,b,c\}$.

Let $n_a$ be the number of good $2$-paths in $G$ of the form $axy$ where $x, y \not \in \{a,b,c\}$, and let $n_b, n_c$ be defined similarly. 
Then the number of good $5$-paths whose middle edge is $ab, bc$ or $ca$ is at most $$ n_an_b + n_b n_c + n_cn_a \le  \frac{(n_a + n_b + n_c)^2}{3}.$$

By the same argument as in the proof of Claim \ref{Claim2path}, it is easy to see that $n_a + n_b + n_c \le 2n$, so the above inequality finishes the proof.
\end{proof}

\begin{claim}
\label{pathsinK_4}
Let $abcd$ be a $K_4$ of the edge-decompostion $\mathcal D$. Then the number of good $5$-paths in $G$ whose middle edge belongs to the $K_4$ is at most $\frac{3n^2}{2}$.
\end{claim}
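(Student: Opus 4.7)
The plan is to follow the template of Claims \ref{Claim2path} and \ref{Triangle}, now with the four ``anchor'' vertices $a, b, c, d$ of the $K_4$. First I would observe that for any good $5$-path $xypqzw$ whose middle edge $pq$ lies in the $K_4$, all four outer vertices $x, y, z, w$ must lie outside $\{a, b, c, d\}$: if, say, $x \in \{a,b,c,d\}$ then $x \neq p$, so $xp$ is an edge of the $K_4$ and, together with $yp$ on the $5$-path, it makes $xyp$ a triangle, contradicting goodness. Defining $n_v$, for $v \in \{a,b,c,d\}$, as the number of good $2$-paths $vxy$ with $x, y \notin \{a,b,c,d\}$, each good $5$-path with middle edge in the $K_4$ decomposes uniquely as two good $2$-paths joined along an edge of the $K_4$, so the total count is at most
$$\sum_{\{p,q\} \subseteq \{a,b,c,d\}} n_p\, n_q \;=\; \sum_{1 \le i < j \le 4} n_i n_j.$$

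Next, by the Cauchy--Schwarz inequality $\sum_{i=1}^{4} n_i^2 \ge \tfrac{1}{4}\bigl(\sum_i n_i\bigr)^2$, we obtain
$$\sum_{i<j} n_i n_j \;=\; \tfrac{1}{2}\!\left(\Bigl(\sum_i n_i\Bigr)^{\!2} - \sum_i n_i^2\right) \;\le\; \tfrac{3}{8}\Bigl(\sum_i n_i\Bigr)^{\!2}.$$
Hence it is enough to prove $n_a + n_b + n_c + n_d \le 2n$, which will give the claimed bound $\tfrac{3}{8}(2n)^2 = \tfrac{3n^2}{2}$.

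To prove this inequality, I would fix $y \notin \{a,b,c,d\}$ and show that there are at most two good $2$-paths of the form $vxy$ with $v \in \{a,b,c,d\}$ and $x \notin \{a,b,c,d\}$. Suppose for contradiction that there are three such paths $v_1 x_1 y,\, v_2 x_2 y,\, v_3 x_3 y$. The disjoint-edge $C_5$ argument from Claim \ref{Claim2path} (which uses only that any two vertices of the $K_4$ are adjacent) forces the three edges $v_1x_1, v_2x_2, v_3x_3$ to pairwise share a vertex. Since these edges lie in the bipartite graph between $\{a,b,c,d\}$ and its complement, no triangle can form among them, so pairwise intersection forces a common vertex; thus either all $v_i$ coincide, contradicting Claim \ref{atmosttwo2paths}, or all $x_i$ coincide to a single $x \notin \{a,b,c,d\}$. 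In the latter case the three distinct $v_i$'s comprise three of the four $K_4$-vertices, say $a, b, c$, so $x$ is adjacent to each of $a, b, c$; but then the vertices $x, a, d, b, c$, with edges $xa, ad, db, bc, cx$ (the middle three lying inside the $K_4$), form a $C_5$ in $G$, a contradiction.

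The main obstacle, and the only step that is not a direct translation of Claim \ref{Triangle}, is this last ``same $x$'' case: with three anchors the analogous argument concluded with a configuration ruled out by the definition of $\mathcal{D}$, whereas here, with the $K_4$ already part of the setup, one instead exploits the fourth $K_4$-vertex $d$ as a ``detour'' to exhibit the forbidden $C_5$ directly. Once this step is in place, the remaining counting is routine.
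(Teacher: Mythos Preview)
Your proof is correct and follows essentially the same approach as the paper's own proof: define $n_a,n_b,n_c,n_d$, bound $\sum_{i<j} n_i n_j \le \tfrac{3}{8}\bigl(\sum_i n_i\bigr)^2$, and show $\sum_i n_i \le 2n$ by the ``at most two good $2$-paths per endpoint $y$'' argument. The one place where you add something the paper leaves implicit is the ``all $x_i$ equal'' subcase: the paper simply says ``using a similar argument as in Claim~\ref{Claim2path}'', whereas you correctly observe that the $\mathcal D$-based contradiction from Claim~\ref{Claim2path} is not available here and instead exhibit the explicit $C_5$ on $x,a,d,b,c$ using the fourth $K_4$-vertex as a detour --- a clean way to close that gap.
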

\begin{proof}
Notice that any good $5$-path $xypqzw$ contains good $2$-paths, $xyp, qzw$ as subpaths. Suppose the middle edge of $xypqzw$ belongs to the $K_4$, $abcd$. Then since $xypqzw$ is a good $5$-path, it follows that $x,y \not \in \{a,b,c,d\}$ and $z,w \not \in \{a,b,c,d\}$.

Let $n_a$ be the number of good $2$-paths in $G$ of the form $axy$ where $x, y \not \in \{a,b,c,d\}$, and let $n_b, n_c, n_d$ be defined similarly. Then the number of good $5$-paths whose middle edge belongs to the $K_4$, $abcd$ is at most \begin{equation} \label{k4ninj} \sum_{i,j \in \{a,b,c,d\}} n_i n_j \le \frac{3}{8} (n_a +n_b +n_c + n_d)^2. \end{equation}
To see that the above inequality is true one simply needs to expand and rearrange the inequality $ \sum_{i,j \in \{a,b,c,d\}} (n_i - n_j)^2 \ge 0.$ 

Using a similar argument as in the proof of Claim \ref{Claim2path}, it is easy to see that for any fixed vertex $y \not \in \{a,b,c, d\}$, there are at most two good $2$-paths of the form $pxy$ with $p \in \{a,b,c, d\}$ and $x \not \in \{a,b,c,d\}$. This implies that $n_a +n_b +n_c + n_d \le 2n$, so using \eqref{k4ninj}, the proof is complete.
\end{proof}

Now we are ready to bound the number of good $5$-paths in $G$ from above. Suppose the number of edges of $G$ is $e(G)$, and let $\alpha_1 e(G)$ and $\alpha_2 e(G)$ be the number of edges of $G$ that are contained in triangles and $2$-paths of the edge-decomposition $\mathcal D$ of $G$, respectively. Let $\alpha_1 + \alpha_2 = \alpha$. In other words, $(1-\alpha) e(G)$ edges of $G$ belong to the $K_4$'s in $\mathcal D$. Then the number of triangles and $2$-paths in $\mathcal D$ is at most $\frac{\alpha_1}{3}e(G)$ and $\frac{\alpha_2}{2}e(G)$ respectively and the number of $K_4$'s in $\mathcal D$ is at most $\frac{(1-\alpha)}{6}e(G)$. Therefore, using Claim \ref{Claim2path}, Claim \ref{Triangle} and Claim \ref{pathsinK_4}, the total number of good $5$-paths in $G$ is at most $$\frac{\alpha_1}{3}e(G) \frac{4n^2}{3} + \frac{\alpha_2}{2}e(G)n^2 + \frac{(1-\alpha)}{6}e(G) \frac{3n^2}{2} \le \frac{\alpha}{2}e(G)n^2 + \frac{(1-\alpha)}{4}e(G) n^2 = \frac{(1+\alpha)}{8}n^3d.$$

Combining this with the fact that the number of good $5$-paths is at least $nd^5/2 - C n^3$ (by \eqref{lower_bound}), we get 
$$\frac{nd^5}{2}  - C n^3 \le  \frac{(1+\alpha)}{8}n^3d,$$

which simplifies to $\frac{d^5}{2}  \le  \frac{(1+\alpha)}{8}n^2d + C n^2 = (1+ o(1)) \frac{(1+\alpha)}{8}n^2d$. Here we used that $d \ge t= \Omega(\sqrt{n})$ (by \eqref{ConnectionBetweendaveandtave}). Therefore, 
\begin{equation}
\label{boundingd}
d \le (1+o(1)) \left(\frac{1+\alpha}{4}\right)^{1/4} \sqrt{n}.
\end{equation}

Recall that when defining $\mathcal D$ we decomposed the edges of each crown-block into a triangle and $2$-paths. This means that the number of triangles of $G$ that belong to crown-blocks of $G$ is at most $\frac{\alpha_1 e(G)}{3}+\frac{\alpha_2e(G)}{2} \le \frac{\alpha e(G)}{2}$, and the number of triangles that belong to $K_4$-blocks of $G$ is at most $\frac{4(1-\alpha)e(G)}{6}$. Therefore, the total number of triangles in $G$ is at most \begin{equation} \label{boundingtriangles} \frac{\alpha e(G)}{2} + \frac{4(1-\alpha)e(G)}{6} = \frac{4-\alpha }{6} e(G) = \frac{(4-\alpha)nd}{12}. \end{equation}
Now using \eqref{boundingd}, we obtain that the number of triangles in $G$ is at most
$$(1+o(1)) \left(\frac{1+\alpha}{4}\right)^{1/4} \frac{(4-\alpha)}{12} n^{3/2}.$$
Now optimizing the coefficient of $n^{3/2}$ over $0 \le \alpha \le 1$, one obtains that it is maximized at $\alpha = 0$, giving the desired upper bound of $(1+o(1)) \frac{1}{3 \sqrt{2}} n^{3/2}.$

\section{On hypergraphs of girth $6$ and further improvement}
\label{Girth6plusImprovementSection}

In this section we will first study $r$-uniform hypergraphs of girth $6$, and prove Theorem \ref{girth6}. Then we use Theorem \ref{girth6} to further (slightly) improve the estimate in Theorem \ref{Main_Result} on the number of triangles in a $C_5$-free graph.

\subsection{Girth $6$ hypergraphs: Proof of Theorem \ref{girth6}}
\label{girth6section}

Let $d$ be the average degree of $H$. Our aim is to show that $d \le \frac{\sqrt{n}}{\sqrt{r}(r-1)}.$
If a vertex has degree less than $d/r$, then we may delete it and the edges incident to it without decreasing the average degree. So we may assume that the minimum degree of $H$, $\delta(H) \ge d/r$.

Suppose there is a vertex $v$ of degree $c\sqrt{n}$ for some constant $c$. Then the first neighborhood $N^H_1(v) := \{x \in V(H) \setminus \{v \}  \mid v,x \in h \text{ for some } h \in E(H)\}$ has size more than $c\sqrt{n}(r-1)$ (since $H$ is linear), and the second neighborhood $ N^H_2(v) = \{x \in V(H) \setminus (N^H_1(v) \cup \{v\}) \mid \exists h \in E(H) \text{ such that } x \in h \text{ and } h \cap N^H_1(v) \not = \emptyset \}$ has size more than $$c\sqrt{n}(r-1) \times \delta(H) (r-1) \ge c\sqrt{n}(r-1) \times \frac{d(r-1)}{r} = \frac{c\sqrt{n}(r-1)^2d}{r}.$$ 
Note that here we used that $H$ has no cycles of length at most four.  On the other hand, since $\abs{N^H_2(v)} \le n$, we have $\frac{c\sqrt{n}(r-1)^2d}{r} \le n$, implying that $d \le \frac{r}{(r-1)^2c} \sqrt{n}$. So if $c > \frac{r^{3/2}}{r-1}$, we have the desired bound on $d$. Thus, we may assume $c \le \frac{r^{3/2}}{r-1}$, which proves that the maximum degree of $H$, $d_{max} \le \frac{r^{3/2}}{r-1} \sqrt{n}$.

Let $\partial H$ denote the $2$-shadow graph of $H$. Let $d^{\partial H}$ and $d^{\partial H}_{max}$ denote the average degree and maximum degree of $\partial H$, respectively. Note that since $H$ is linear,  $d^{\partial H} = (r-1)d$ and $d^{\partial H}_{max} = (r-1)d_{max} \le r^{3/2} \sqrt{n}$. 

We say a $3$-path $v_0v_1v_2v_3$ in $\partial H$ is \emph{bad} if either $\{v_0,v_1,v_2\} \subseteq h$ or $\{v_1,v_2,v_3\} \subseteq h$ for some hyperedge $h \in E(H)$; otherwise it is \emph{good}. 

By the Blakley-Roy inequality the total number of (ordered) $3$-walks in $\partial H$ is at least $n(d^{\partial H})^3$. We claim that at most $3n(d^{\partial H}_{max})^2$ of these $3$-walks are not $3$-paths. Indeed, suppose $v_0v_1v_2v_3$ is a $3$-walk that is not a $3$-path. Then then there exists a repeated vertex $v$ in the walk such that either $v_0 = v_2 =v$ or $v_1 = v_3=v$ or $v_0 = v_3=v$. Since $v$ can be chosen in $n$ ways and the other two vertices of the walk are adjacent to $v$, we can choose them in at most $(d^{\partial H}_{max})^{2}$ different ways. Therefore, the number of (ordered) $3$-paths in $\partial H$ is at least $n(d^{\partial H})^3-3n(d^{\partial H}_{max})^2 \ge n(d^{\partial H})^3 - 3n(r^{3/2} \sqrt{n})^2 = n(d^{\partial H})^3 - 3r^3n^2$.

We will show that most of these $3$-paths are good by bounding the number of bad $3$-paths. Suppose $v_0v_1v_2v_3$ is a bad $3$-path. Then either $\{v_0,v_1,v_2\}$ or $\{v_1,v_2,v_3\}$ is contained  in some hyperedge $h \in E(H)$. In the first case, the number of choices for $v_0v_1v_2$ is $\abs{E(H)}\binom{r}{3}3!$ as there are $\binom{r}{3}$ ways to choose the vertices $v_0,v_1,v_2$ from a hyperedge of $H$ and then $3!$ ways to order them. And there are at most $d^{\partial H}_{max}$ choices for $v_3$. The second case is similar. Therefore, in total, the number of bad $3$-paths in $\partial H$ is at most $2\abs{E(H)}\binom{r}{3}3! d^{\partial H}_{max} < 2\frac{nd}{r}r^3 d^{\partial H}_{max} \le 2n r^2 d_{max} d^{\partial H}_{max} \le  2 \frac{r^{5}}{r-1} n^{2}$. So the number of (ordered) good $3$-paths in $\partial H$ is at least 
\begin{equation}
\label{lower3paths}
n(d^{\partial H})^3 - 3r^3n^2-  2 \frac{r^{5}}{r-1} n^{2} = n(d^{\partial H})^3 - c_r n^2 = (r-1)^3 d^3n - c_r n^2,
\end{equation} where $c_r = 3 r^3 +\frac{2r^5}{r-1}.$

The following claim is useful for upper bounding the number of (ordered) good $3$-paths in $\partial H$.

\begin{claim}
\label{CyclesInShadow}
If $C$ is a cycle of length at most five in $\partial H$, then its vertex set is contained in some hyperedge of $H$.
\end{claim}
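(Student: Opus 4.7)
The plan is to argue by contradiction with the girth-six hypothesis: for each edge $v_iv_{i+1}$ of $C$ (indices modulo $k$), pick a hyperedge $h_i \in E(H)$ containing $\{v_i,v_{i+1}\}$. If the $h_i$ were pairwise distinct, then $v_1\,h_1\,v_2\,h_2\cdots v_k\,h_k\,v_1$ would be a Berge cycle of length $k \le 5$, contradicting girth six. Hence at least two of them coincide, say $h_i = h_j$ with $i \ne j$. This is the starting point for the case analysis below.

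I would then proceed by induction on $k \in \{3,4,5\}$. The base case $k = 3$ is immediate: any two of the three hyperedges $h_1, h_2, h_3$ together cover all three vertices of $C$, so the repeated hyperedge $h_i = h_j$ must contain $V(C)$. For the inductive step $k \in \{4, 5\}$, I split into subcases according to whether the coinciding indices $i, j$ are cyclically adjacent. If they are adjacent (say $h_i = h_{i+1}$), then $h_i$ contains three consecutive vertices $v_i, v_{i+1}, v_{i+2}$, so in particular $v_iv_{i+2} \in E(\partial H)$; bypassing $v_{i+1}$ produces a cycle $C'$ of length $k-1$ in $\partial H$ to which I apply the inductive hypothesis. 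If they are non-adjacent, then $h_i$ already covers four vertices $v_i, v_{i+1}, v_j, v_{j+1}$ of $C$; for $k = 4$ this is already $V(C)$ and we are done, and for $k = 5$ the lone missed vertex together with two of those four (connected by the edge supplied by $h_i$ inside $\partial H$) forms a triangle in $\partial H$ to which I apply the base case.

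The main technical point — where girth six does real work beyond just ruling out a direct Berge $k$-cycle — is forcing the hyperedge $h$ produced by the inductive hypothesis (or by the base case in the non-adjacent $k=5$ subcase) to actually equal $h_i$. Indeed, if $h \ne h_i$, then $h$ and $h_i$ would share two vertices (namely $v_i$ and $v_{i+2}$ in the adjacent subcase, or two vertices common to the smaller triangle and $h_i$ in the non-adjacent subcase), violating the linearity of $H$ implied by girth $\ge 3$. Once we know $h = h_i$, the hyperedge $h_i$ contains the vertices of the shorter cycle together with the one bypassed (or missed) vertex, i.e. all of $V(C)$, completing the induction. The only care needed in writing this up is to keep the cyclic index arithmetic straight and to verify in each subcase that the shorter cycle produced is genuinely a cycle in $\partial H$ (which follows from the vertices of $C$ being distinct).
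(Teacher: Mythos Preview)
Your proof is correct, but it follows a different route from the paper's. The paper argues in one stroke: having chosen $h_i \ni \{v_i,v_{i+1}\}$ for each edge of $C$, it observes that if the $h_i$ are not all equal then one can find (cyclic) indices $j,j'$ with $h_j,h_{j+1},\ldots,h_{j'}$ pairwise distinct and $h_{j'+1}=h_j$, so that these hyperedges together with the intermediate vertices $v_{j+1},\ldots,v_{j'+1}$ form a Berge cycle of length at most $k\le 5$, contradicting girth six; hence all the $h_i$ coincide and this common hyperedge contains every $v_i$. You instead run an induction on $k$: a single coincidence $h_i=h_j$ either already covers $V(C)$ (base case $k=3$, and the non-adjacent subcase for $k=4$) or lets you shorten the cycle by one (adjacent subcase) or jump straight to a triangle (non-adjacent subcase for $k=5$), after which linearity forces the hyperedge produced by the inductive call to equal $h_i$. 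The paper's argument is slicker and uniform in $k$; yours is longer but makes more transparent exactly which parts of the girth-six hypothesis are doing work at each step --- you invoke the absence of a Berge $k$-cycle only once at the outset, and every subsequent step uses only linearity.
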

\begin{proof}
Let $v_1,v_2,\ldots,v_k,v_1$ be a cycle of length $k$ in $\partial H$ (for some $k \le 5$). For each $i$, let $h_i$ be the hyperedge of $H$ containing $v_i,v_{i+1}$ (addition in the subscripts is taken modulo $k$). If these $k$ hyperedges are not all the same, there exists $j, j'$ such that $h_j, h_{j+1}, \ldots, h_{j'}$ are all distinct but $h_{j'+1} = h_j$. So these hyperedges form a cycle in $H$ of length at most $k \le 5$, a contradiction. Therefore,  $h_1 = h_2 = \ldots =h_k = h$; then $v_1,v_2,\ldots,v_k \in h$, as desired.
\end{proof}

In order to upper bound the number of (ordered) good $3$-paths in $\partial H$, let us first fix a hyperedge $h$ of $H$, and bound the number of good $3$-paths $v_0v_1v_2v_3$ such that $v_0,v_1 \in h$.

\begin{claim}
\label{FixingOneHyperedge}
For any vertex $v \not \in h$, there are at most $(r-1)$ good $3$-paths $v_0v_1v_2v$ such that $v_0,v_1 \in h$. 
\end{claim}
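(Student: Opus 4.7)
Fix $v \notin h$. The plan is to show that for any good $3$-path $v_0v_1v_2v$ with $v_0,v_1 \in h$, the vertex $v_2$ is uniquely determined, then $v_1$ is uniquely determined given $v_2$, and finally $v_0$ ranges over $h \setminus \{v_1\}$, giving at most $r-1$ such paths. Throughout I would use two standing observations: (i) since $v_0,v_1 \in h$ and the path is good, the vertex $v_2$ cannot lie in $h$ (otherwise $\{v_0,v_1,v_2\}\subseteq h$ and the path would be bad); and (ii) for any hyperedge $h^*$ of $H$, if $|h^* \cap h| \ge 2$ then $h^*=h$ by linearity.

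The main step is forcing $v_2$. Suppose for contradiction that $v_0v_1v_2v$ and $v_0'v_1'v_2'v$ are good $3$-paths with $v_2 \neq v_2'$. If $v_1 \neq v_1'$, then $v\,v_2\,v_1\,v_1'\,v_2'\,v$ is a $5$-cycle in $\partial H$: the five vertices are distinct because $v_1,v_1' \in h$ while $v,v_2,v_2' \notin h$ (combined with the assumed inequalities), and all five edges are $2$-subsets of the hyperedges witnessing the two $3$-paths. By Claim \ref{CyclesInShadow} the vertex set lies in a single hyperedge $h^*$; then $\{v_1,v_1'\} \subseteq h^* \cap h$ forces $h^*=h$, contradicting $v \in h^*$ and $v \notin h$. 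If instead $v_1=v_1'$, then $v_1\,v_2\,v\,v_2'$ is a $4$-cycle in $\partial H$, and Claim \ref{CyclesInShadow} places $\{v_1,v_2,v\}$ inside a single hyperedge, contradicting the goodness of $v_0v_1v_2v$.

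Next, with $v_2$ fixed, I would argue $v_1$ is unique. If $v_1,v_1' \in h$ are distinct neighbors of $v_2$ in $\partial H$, then $v_1\,v_2\,v_1'$ is a triangle in $\partial H$. Applying Claim \ref{CyclesInShadow} and observation (ii) gives that this triangle sits inside $h$, contradicting $v_2 \notin h$. Finally, with $v_1$ and $v_2$ determined, $v_0$ can be any element of $h \setminus \{v_1\}$ and no goodness obstruction arises: any hyperedge containing $\{v_0,v_1\} \subseteq h$ must equal $h$ by linearity, but $v_2 \notin h$, so $\{v_0,v_1,v_2\}$ is in no hyperedge automatically. This produces at most $1 \cdot 1 \cdot (r-1) = r-1$ good $3$-paths.

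The main obstacle I anticipate is the careful case analysis for uniqueness of $v_2$: one has to verify that the purported $4$- and $5$-cycles are honest cycles (all vertices distinct and all edges present in $\partial H$) before Claim \ref{CyclesInShadow} can be invoked, and then chase down exactly which hyperedge the resulting shadow cycle must live in so that linearity produces the contradiction. Everything else is bookkeeping once this structural lemma is in hand.
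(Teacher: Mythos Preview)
Your proof is correct and essentially identical to the paper's: both arguments use Claim~\ref{CyclesInShadow} on the same triangle, $4$-cycle, and $5$-cycle in $\partial H$ to force $v_1$ and $v_2$ to be uniquely determined, leaving at most $r-1$ choices for $v_0 \in h \setminus \{v_1\}$. The only cosmetic difference is the order of the case analysis---the paper pins down $v_1$ first and then $v_2$, while you do the reverse---but the three configurations and the linearity contradiction are the same.
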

\begin{proof}
Suppose $v_0v_1v_2v$ and $v'_0v'_1v'_2v$ are good $3$-paths with $v_0,v_1,v'_0,v'_1 \in h$. Then $v_2, v'_2 \not \in h$ because it would contradict the definition of a good $3$-path. We will prove that $v_1=v_1'$ and $v_2=v_2'$.

Suppose $v_1 \not = v'_1$. Then depending on whether $v_2 = v'_2$ or not, either $v_1v'_1v'_2vv_2$ forms a five-cycle or $v_1v'_1v'_2$ forms a triangle in $\partial H$. Then by Claim \ref{CyclesInShadow}, $v_1,v'_1,v'_2 \in h'$ for some hyperedge $h' \in E(H)$. (Note that $h' \not = h$, since $v'_2 \not \in h$.) But then $h$ and $h'$ are two different hyperedges of $H$ that share at least two vertices, namely $v_1,v'_1$, contradicting the fact that $H$ is linear. Thus $v_1 = v'_1$.

Now if $v_2 \not = v'_2$, then $vv_2v_1v'_2$ is a four-cycle in $\partial H$, so it must be contained in a hyperedge of $H$, but this means the $3$-path $v_0v_1v_2v$ is bad, a contradiction. Thus $v_2 = v'_2$. 

In summary, any two good $3$-paths $v_0v_1v_2v$ and $v'_0v'_1v'_2v$ with $v_0,v_1,v'_0,v'_1 \in h$ can only differ in their first vertex, of which there are at most $r-1$ choices, proving the claim.
\end{proof}

Claim \ref{FixingOneHyperedge} implies that for any fixed hyperedge $h \in E(H)$, there are at most $(r-1)n$ good $3$-paths $v_0v_1v_2v_3$ with $v_0,v_1 \in h$. Therefore, the total number of good $3$-paths in $H$ is at most $\abs{E(H)} (r-1)n = \frac{(r-1)d n^2}{r}$. 

Combining this with \eqref{lower3paths}, we obtain $(r-1)^3 d^3n - c_r n^2 \le \frac{(r-1)d n^2}{r}$. Dividing through by $d$ and using that $d = \Omega(\sqrt{n})$, we get $(r-1)^3 d^2n  \le  (1+o(1)) \frac{(r-1) n^2}{r}$ and upon simplification  and rearranging, we get $$d \le (1+o(1)) \frac{ \sqrt{n}}{\sqrt{r}(r-1)},$$ so using $\abs{E(H)} = nd/r$, completes the proof.

\subsection{Further improving the estimate on $ex(n,K_3,C_5)$}
\label{FurtherImprovementsection}
Here we slightly improve Theorem \ref{Main_Result}, by establishing a connection to girth $6$ hypergraphs and using Theorem \ref{girth6}. 

Recall that in the proof of Theorem \ref{Main_Result}, $G$ denotes a $C_5$-free graph, and $(1-\alpha) e(G)$ edges of $G$ belong to the $K_4$'s in the edge-decomposition $\mathcal D$ of $G$. Let us note that the vertex sets of two different $K_4$'s of $G$ do not share more than one vertex, since $G$ is $C_5$-free. Consider the $4$-uniform hypergraph $H$ formed by taking the vertex sets of all the $K_4$'s of $G$. Then notice that $H$ is linear and if $H$ contains a (Berge) cycle of length at most $5$, then $G$ contains a $C_5$. Therefore, $H$ is of girth $6$. Therefore, by Theorem \ref{girth6}, $H$ contains at most $n^{3/2}/24$ hyperedges. Thus at most $n^{3/2}/24 \times \binom{4}{2} = n^{3/2}/4$ edges of $G$ belong to the $K_4$'s in the edge-decomposition $\mathcal D$. Therefore, $(1-\alpha) e(G) \le \frac{n^{3/2}}{4}$, which implies 
$d \le \frac{\sqrt{n}}{2(1-\alpha)}.$
Combining this with \eqref{boundingd}, we get $$d \le (1+o(1)) \min \left \{\frac{1}{2(1-\alpha)},  \left(\frac{1+\alpha}{4}\right)^{1/4} \right \} \sqrt{n},$$
so using \eqref{boundingtriangles}, we obtain that the number of triangles in $G$ is at most $$(1+o(1))\frac{(4-\alpha)}{12} \min \left \{ \frac{1}{2(1-\alpha)},  \left(\frac{1+\alpha}{4}\right)^{1/4} \right \} n^{3/2}.$$
The above function is maximized at $\alpha = 0.343171$, proving that $ex(n, K_3, C_5) \le 0.231975 n^{3/2}$.

\section{$C_5$-free and induced-$C_4$-free graphs: Proof of Theorem \ref{indC4C5}}
\label{indC4C5section}

Let $G$ be a $C_5$-free graph on $n$ vertices having no induced copies of $C_4$. Let $G_{\Delta}$ be the subgraph of $G$ consisting of the edges that are contained in triangles of $G$, and let $G_S$ be the subgraph of $G$ consisting of the remaining edges of $G$. Since $G_{\Delta}$ is $C_5$-free and every edge of it is contained in a triangle, by the same argument of the proof of Theorem \ref{Main_Result}, the triangles of $G_{\Delta}$ can be partitioned into crown-blocks and $K_4$-blocks. So there is a decomposition $\mathcal D$ of the edges of $G_{\Delta}$ into paths of length 2, triangles and $K_4$'s. 
First let us note that Claim \ref{atmosttwo2paths} in the proof of Theorem \ref{Main_Result} still holds for $G$ (not just for $G_{\Delta}$), as shown below.

\begin{claim}
	\label{atmosttwo2pathsNew}
Let $u, v$ be two non-adjacent vertices of $G$. Then the number of paths of length $2$ between $u$ and $v$ is at most two. Moreover, if $uxv$ and $uyv$ are the paths of length $2$ between $u$ and $v$, then $x$ and $y$ are adjacent.
\end{claim}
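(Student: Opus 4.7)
The plan is to mimic the proof of Claim \ref{atmosttwo2paths} but replace the "every edge is in a triangle" hypothesis with the induced-$C_4$-free hypothesis of $G$. The induced-$C_4$-free condition turns out to be exactly what is needed to force the adjacency of the middle vertices of two $2$-paths between $u$ and $v$.

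First I would prove the second part. Suppose $uxv$ and $uyv$ are two $2$-paths between the non-adjacent vertices $u$ and $v$. The four vertices $u,x,v,y$ are distinct, and the cycle $uxvy$ is a $4$-cycle in $G$. Since $G$ is induced-$C_4$-free, this cycle must have a chord. The only possible chords are $uv$ and $xy$, but $u$ and $v$ are non-adjacent by assumption, so the chord must be $xy$. Thus $x$ and $y$ are adjacent.

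Next, I would prove the first part by contradiction. Suppose there exist three distinct vertices $x,y,z$ such that $uxv$, $uyv$, $uzv$ are $2$-paths between $u$ and $v$. Applying the second part to the pair of $2$-paths $uxv, uyv$ yields that $x$ and $y$ are adjacent. Now consider the sequence $u,x,y,v,z,u$: all five vertices are distinct (since $u,v \notin \{x,y,z\}$ and $x,y,z$ are pairwise distinct), and the edges $ux, xy, yv, vz, zu$ are all present in $G$. Hence this sequence forms a $C_5$ in $G$, contradicting the fact that $G$ is $C_5$-free.

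There is no real obstacle here: the proof is a direct two-step use of the two forbidden structures. The only subtlety compared to the proof of Claim \ref{atmosttwo2paths} is that in the original argument the adjacency of $x$ and $y$ was derived from the existence of a triangle through the edge $ux$, whereas here it is derived from ruling out an induced $C_4$ on $\{u,x,v,y\}$; the rest of the argument is identical.
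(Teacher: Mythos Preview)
Your proof is correct and follows exactly the same approach as the paper: the paper also derives the adjacency of $x$ and $y$ directly from the induced-$C_4$-free hypothesis, and then obtains the $C_5$ $uxyvz$ from three $2$-paths in the same way. There is nothing to add.
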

\begin{proof}
The second part of the claim is trivial since $G$ does not contain an induced copy of $C_4$. To see the first part of the claim, suppose $uxv, uyv, uzv$ are three distinct paths of length $2$ in $G$. Then $x$ and $y$ are adjacent, so $uxyvz$ is a $C_5$ in $G$, a contradiction.
\end{proof}

Our goal is to bound the average degree $d$ of $G$. If a vertex has degree less than $d/2$, then it may be deleted without decreasing the average degree of $G$, so we may assume that $G$ has minimum degree at least $d/2$. Now using this fact and Claim \ref{atmosttwo2pathsNew}, one can show that the maximum degree of $G$ is at most $10 \sqrt{n}$ by repeating the same argument as in the proof of Claim \ref{max_degree}. 

We say a $5$-path $v_0v_1v_2v_3v_4v_5$ is \emph{bad} if there exists an $i$ such that $v_iv_{i+1}v_{i+2}$ is a triangle of $G$; otherwise it called \emph{good}. Similarly, a $2$-path $abc$ is \emph{good} if $a$ and $c$ are not adjacent. By the same argument as in the proof of Theorem \ref{Main_Result}, the number of (unordered) good $5$-paths in $G$ is at least \begin{equation}
\label{looower}
\frac{nd^5}{2} - Cn^3
\end{equation} for some constant $C > 0$. Now we bound the number of good $5$-paths in $G$ from above. Let $\abs{E(G_{\Delta})} = \alpha \abs{E(G)}$ for some $\alpha \ge 0$, so $\abs{E(G_S)} = (1-\alpha) \abs{E(G)}$. 

\begin{claim}
\label{single_edge}
The number of good $5$-paths in $G$ whose middle edge is contained in $G_S$ is at most $ \abs{E(G_S)}n^2$.
\end{claim}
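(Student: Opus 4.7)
My plan is to fix the middle edge $pq \in E(G_S)$ of a good $5$-path $v_0v_1pqv_4v_5$. Because $pq$ lies in no triangle, the triangle-avoidance conditions at the triples $v_1pq$ and $pqv_4$ are automatic, so the only remaining ``good'' requirements are that $v_0v_1p$ and $qv_4v_5$ are good $2$-paths (in particular $v_0,v_1,v_4,v_5 \notin \{p,q\}$). Hence if $n_p$ (resp.\ $n_q$) denotes the number of good $2$-paths having $p$ (resp.\ $q$) as an endpoint, middle in $V(G) \setminus \{p,q\}$, and other endpoint in $V(G) \setminus \{p,q\}$, then the number of good $5$-paths with middle $pq$ is at most $n_p n_q$.

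The key step will be to show that, for every fixed $y \in V(G) \setminus \{p,q\}$, there are at most $2$ good $2$-paths with endpoint set $\{p,y\}$ or $\{q,y\}$ (and middle outside $\{p,q\}$). Granting this, summing over $y$ gives $n_p + n_q \le 2n$, so by AM--GM $n_p n_q \le n^2$; summing over $pq \in E(G_S)$ then produces the claimed bound $\abs{E(G_S)} n^2$.

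To prove the key step I will argue by contradiction: suppose that at least three such $2$-paths exist. By Claim \ref{atmosttwo2pathsNew} at most two share the endpoint pair $\{p,y\}$ and at most two share $\{q,y\}$, so there must be paths $pxy$ and $qx'y$ with $x,x' \notin \{p,q\}$. If $x \neq x'$, then $p,x,y,x',q$ form a $C_5$ via the edges $px, xy, yx', x'q, qp$, where pairwise distinctness of these five vertices follows from the path conditions, the good-$2$-path conditions ($p,y$ and $q,y$ non-adjacent), and $x,x' \notin \{p,q\}$; this contradicts $C_5$-freeness. In the remaining case $x = x'$, one of the endpoint pairs hosts a second path $rx''y$ with $x'' \neq x$, and the $5$-cycle $r, x'', y, x, s, r$ (where $\{r,s\}=\{p,q\}$) provides the contradiction. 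This degenerate subcase, where two of the three $2$-paths share their middle so the naive $C_5$-construction collapses, is the main obstacle; producing a genuine $C_5$ requires invoking a third path with a different middle vertex and using the restriction that middles lie outside $\{p,q\}$ to guarantee that all five vertices of the resulting cycle are distinct.
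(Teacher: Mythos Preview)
Your proof is correct and follows essentially the same approach as the paper's. The paper simply appeals to ``the same argument as in the proof of Claim~\ref{Claim2path}'' for the bound $n_a+n_b\le 2n$, whereas you spell out the adapted argument explicitly; your case analysis (building a $C_5$ directly, and handling the degenerate $x=x'$ subcase by invoking the third path) is exactly the two-vertex analogue of the pairwise-intersection argument used there, with the ``all $x_i$ equal'' branch now ruled out by pigeonhole on $\{p,q\}$ rather than by the $K_4$-block contradiction.
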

\begin{proof}
The proof is very similar to that of the proof of Claim \ref{Claim2path}. A good $5$-path $xyabzw$ whose middle edge $ab$ is in $G_S$ contains good $2$-paths, $xya, bzw$ as subpaths.

Let $n_a$ be the number of good $2$-paths in $G$ of the form $axy$ where $x, y \not = b$, and let $n_b$ be the number of good $2$-paths in $G$ of the form $bxy$ where $x, y \not = a$. 
Then the number of good $5$-paths whose middle edge is $ab$ is at most $ n_an_b \le  (n_a + n_b)^2/4.$ By the same argument as in the proof of Claim \ref{Claim2path}, it is easy to see that $n_a + n_b \le 2n$, so the number of good $5$-paths whose middle edge is $ab \in E(G_S)$ at most $n^2$. Adding these estimates for all the edges $ab \in E(G_S)$  finishes the proof of the claim.
\end{proof}

Let us further assume that the number of edges of $G_{\Delta}$ that belong to paths of length 2, triangles and $K_4$'s in its edge-decomposition $\mathcal D$ be $\alpha_1 \abs{E(G)}, \alpha_2 \abs{E(G)}, \alpha_3 \abs{E(G)}$, respectively. (Of course, $\alpha_1 + \alpha_2 + \alpha_3 = \alpha$.) Since Claim \ref{atmosttwo2pathsNew} holds, one can easily check that the proofs of Claim \ref{Claim2path}, Claim \ref{Triangle} and Claim \ref{pathsinK_4} are still valid, so these claims hold in the current setting too. These claims, together with Claim \ref{single_edge}, imply that the number of good $5$-paths in $G$ is at most $$
    \frac{\alpha_1 \abs{E(G)}}{2} n^2 + \frac{\alpha_2 \abs{E(G)}}{3} \frac{4n^2}{3} + \frac{\alpha_3 \abs{E(G)}}{6} \frac{3n^2}{2} +
\abs{E(G_S)}n^2 \newline \le \frac{\alpha \abs{E(G)}}{2} n^2 + (1- \alpha) \abs{E(G)} n^2.
$$

We will now bound the right-hand-side of the above inequality by carefully selecting a $C_5$-free, and $C_4$-free subgraph $G'$ of $G$, as follows: We select all the edges of $G_S$ and the following edges from $G_{\Delta}$: From each crown-block $\{abc_1, abc_2,\ldots, abc_k\}$ of $G_{\Delta}$, we select the edges $ac_1, ac_2,\ldots,ac_k$ to be in $G'$. From each $K_4$-block $abcd$ we select the edges $ab, bc, ac, ad$ to be in $G'$. 

To show that $G'$ is $C_4$-free we use the following claim.

\begin{claim}
	\label{C_4_inside_block}
All four edges of any $C_4$ in $G$ belong to only one block of $G_{\Delta}$.
\end{claim}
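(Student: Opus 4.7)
The plan is to exploit the induced-$C_4$-free hypothesis to show that any $C_4$ in $G$ must contain a chord, which will then force its four edges to participate in two triangles sharing that chord, placing them in a single block of $G_\Delta$. So let $a,b,c,d$ be the vertices of an arbitrary $C_4$ in $G$ with edges $ab, bc, cd, da$. Since $G$ contains no induced $C_4$, at least one of the diagonals $ac$ or $bd$ must belong to $E(G)$, and by symmetry I may assume $ac \in E(G)$.

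With $ac$ present, both $abc$ and $acd$ are triangles of $G$, and in particular each of the four edges $ab, bc, cd, da$ (as well as $ac$ itself) is contained in a triangle, hence belongs to $E(G_\Delta)$. The next step is to observe that the two triangles $abc$ and $acd$ share the edge $ac$, so by the very definition of a block they lie in the same block of $G_\Delta$. Since every edge of a triangle belongs to the block containing that triangle, the edges $ab, bc$ lie in the block of $abc$ and the edges $cd, da$ lie in the block of $acd$; these two blocks coincide, so all four edges of the $C_4$ belong to a single common block, as required.

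There is no substantial obstacle here: once the chord has been extracted from the induced-$C_4$-free hypothesis, the remainder is pure unwinding of the definition of a block. The only mild point to check is the case where both diagonals $ac$ and $bd$ are edges of $G$, so that $\{a,b,c,d\}$ spans a $K_4$; but the argument above applies verbatim, the common block in that case simply being the $K_4$-block of $\{a,b,c,d\}$ rather than a crown-block.
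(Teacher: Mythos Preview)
Your proof is correct and follows essentially the same approach as the paper: extract a diagonal from the induced-$C_4$-free hypothesis, observe that this diagonal creates two triangles sharing an edge, and conclude that all four edges of the $C_4$ lie in the common block of those triangles. The additional remark about the $K_4$ case is harmless but unnecessary, since the argument already covers it.
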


\begin{proof}
Let $xyzw$ be a $4$-cycle in $G$. Then since $G$ does not contain an induced copy of $C_4$, either $xz$ or $yw$ is an edge of $G$. In the first case, $xzy, xzw$ are triangles of $G$, and in the second case $ywz, ywx$ are triangles of $G$. In both cases, the two triangles share an edge, so they belong to the same block of $G_{\Delta}$. Hence, all four edges of $xyzw$ lie in the same block of $G_{\Delta}$.
\end{proof}

By Claim \ref{C_4_inside_block}, the edge set of every $C_4$ is completely contained in some block of $G_{\Delta}$, and it is easy to check that the selected edges in each block of $G_{\Delta}$ form a $C_4$-free graph. Therefore, $G'$ is $C_4$-free. Since it is a subgraph of $G$, it is also $C_5$-free. Therefore, by a theorem of Erd\H{o}s and Simonovits \cite{Erd_Sim}, $\abs{E(G')} \le \frac{1}{2 \sqrt{2}} n^{3/2}$. On the other hand, since all the edges of $G_S$ and at least half the edges of $G_{\Delta}$ are selected, we have $\abs{E(G')} \ge \abs{E(G_S)} + \frac{\abs{E(G_{\Delta})}}{2} = (1- \alpha) \abs{E(G)} + \frac{\alpha \abs{E(G)}}{2}$. Therefore, 
$$ \frac{\alpha \abs{E(G)}}{2} + (1- \alpha) \abs{E(G)} \le \frac{1}{2 \sqrt{2}} n^{3/2}.$$

Therefore, by the discussion above, the number of good $5$-paths in $G$ is at most $\frac{1}{2 \sqrt{2}} n^{3/2} \times n^2 = \frac{1}{2 \sqrt{2}} n^{7/2}$. Combining this with \eqref{looower}, we get 
$$\frac{nd^5}{2} - Cn^3 \le \frac{1}{2 \sqrt{2}} n^{7/2},$$ so $\frac{nd^5}{2} \le (1 + o(1)) \frac{1}{2 \sqrt{2}} n^{7/2}$, implying that $d \le \frac{\sqrt{n}}{\sqrt[10]{2}}$, finishing the proof.

\section*{Acknowledgements}
The research of the authors is partially supported by the National Research, Development and Innovation Office -- NKFIH, grant K116769.

\end{document}